\theoremstyle{plain}
\newtheorem{prop}{Proposition}[section]
\newtheorem{lem}[prop]{Lemma}
\theoremstyle{definition}
\newtheorem*{defn*}{Definition}             % unnumbered definition
\newcommand{\defnemph}[1]{\textbf{#1}}
\newenvironment{rem}{\noindent\textsl{Remark.}}{}  % perhaps looks better than rem above?
\newtheorem*{example}{Example}
\numberwithin{equation}{section}
\newcommand{\abs}[1]{\left\vert#1\right\vert}
\newcommand{\set}[1]{\left\{#1\right\}}
\newcommand{\setc}[2]{\left\{#1 \;\left| \; #2 \right. \right\}}
\newcommand{\Complex}{\mathbb C}
\newcommand{\Natural}{\mathbb N}
\newcommand{\Id}{\boldsymbol{1}}
\newcommand{\To}{\rightarrow}
\newcounter{comment}
\newcommand{\comment}[1]{}
\newcommand{\noop}[1]{}
\renewcommand{\imath}{\mathfrak{i}}
\renewcommand{\jmath}{\mathfrak{j}}
\newcommand{\JW}[1]{f^{(#1)}}
\newcommand{\coeff}[2]{\coefficient_{\in \JW{#1}}\left(#2\right)}
\DeclareMathOperator*{\coefficient}{coeff}
\newcommand{\db}[1]{\left(\left(#1\right)\right)}
\newcommand{\shortversion}[1]{}     % the short version can probably lose some appendices as well!
\newcommand{\draftversion}[1]{}
\newcommand{\arxiv}[1]{\href{http://arxiv.org/abs/#1}{\tt arXiv:\nolinkurl{#1}}}
\newcommand{\googlebooks}[1]{(preview at \href{http://books.google.com/books?id=#1}{google books})}
\newcommand{\xyTLFiveTwentySix}{ \xybox{(0,-14.)*\xybox{
(1.5,0);(16.5,28) **i@{-},(3,28);(12,28)  **\crv{(3,22.6)&(12,22.6)},
(6,28);(9,28)  **\crv{(6,26.2)&(9,26.2)},
(15,28);(9,0)  **\crv{(15,8.4)&(9,19.6)},
(15,0);(12,0)  **\crv{(15,1.8)&(12,1.8)},
(6,0);(3,0)  **\crv{(6,1.8)&(3,1.8)},
}}}\newcommand{\xyTLFiveTwentyFiveTimesTLFiveTwelve} { \xybox{(0,-14.)*\xybox{*\xybox{
(1.5,0);(16.5,12) **i@{-},(3,12);(9,0)  **\crv{(3,3.6)&(9,8.4)},
(6,12);(15,12)  **\crv{(6,6.6)&(15,6.6)},
(9,12);(12,12)  **\crv{(9,10.2)&(12,10.2)},
(15,0);(12,0)  **\crv{(15,1.8)&(12,1.8)},
(6,0);(3,0)  **\crv{(6,1.8)&(3,1.8)},
}!U*\xybox{
(1.5,0);(16.5,16) **i@{-},(3,16);(12,16)  **\crv{(3,10.6)&(12,10.6)},
(6,16);(9,16)  **\crv{(6,14.2)&(9,14.2)},
(15,16);(3,0)  **\crv{(15,4.8)&(3,11.2)},
(15,0);(12,0)  **\crv{(15,1.8)&(12,1.8)},
(9,0);(6,0)  **\crv{(9,1.8)&(6,1.8)},
}!U}}}\newcommand{\xyMultiplicativeGeneratorFiveOne}{ \xybox{
(1.5,0);(16.5,18) **i@{-},(3,18);(6,18)  **\crv{(3,16.2)&(6,16.2)},
(9,18);(9,0)  **\crv{(9,5.4)&(9,12.6)},
(12,18);(12,0)  **\crv{(12,5.4)&(12,12.6)},
(15,18);(15,0)  **\crv{(15,5.4)&(15,12.6)},
(6,0);(3,0)  **\crv{(6,1.8)&(3,1.8)},
}}\newcommand{\xyMultiplicativeGeneratorFiveTwo}{ \xybox{
(1.5,0);(16.5,18) **i@{-},(3,18);(3,0)  **\crv{(3,5.4)&(3,12.6)},
(6,18);(9,18)  **\crv{(6,16.2)&(9,16.2)},
(12,18);(12,0)  **\crv{(12,5.4)&(12,12.6)},
(15,18);(15,0)  **\crv{(15,5.4)&(15,12.6)},
(9,0);(6,0)  **\crv{(9,1.8)&(6,1.8)},
}}\newcommand{\xyMultiplicativeGeneratorFiveThree}{ \xybox{
(1.5,0);(16.5,18) **i@{-},(3,18);(3,0)  **\crv{(3,5.4)&(3,12.6)},
(6,18);(6,0)  **\crv{(6,5.4)&(6,12.6)},
(9,18);(12,18)  **\crv{(9,16.2)&(12,16.2)},
(15,18);(15,0)  **\crv{(15,5.4)&(15,12.6)},
(12,0);(9,0)  **\crv{(12,1.8)&(9,1.8)},
}}\newcommand{\xyMultiplicativeGeneratorFiveFour}{ \xybox{
(1.5,0);(16.5,18) **i@{-},(3,18);(3,0)  **\crv{(3,5.4)&(3,12.6)},
(6,18);(6,0)  **\crv{(6,5.4)&(6,12.6)},
(9,18);(9,0)  **\crv{(9,5.4)&(9,12.6)},
(12,18);(15,18)  **\crv{(12,16.2)&(15,16.2)},
(15,0);(12,0)  **\crv{(15,1.8)&(12,1.8)},
}}\newcommand{\xyJonesWenzlIdempotentSixPlusOne}[1]{\xybox{(0,-9.)*\xybox{*\xybox{
(1.5,0);(19.5,18) **i@{-},(3,18);(3,14.4) **@{-}, (3,3.6);(3,0) **@{-},(6,18);(6,14.4) **@{-}, (6,3.6);(6,0) **@{-},(9,18);(9,14.4) **@{-}, (9,3.6);(9,0) **@{-},(15,18);(15,14.4) **@{-}, (15,3.6);(15,0) **@{-},(18,18);(18,14.4) **@{-}, (18,3.6);(18,0) **@{-},(12,16.2) *h+++{\cdots},(12,1.8) *+++{\cdots},(2.25,3.6);(18.75,14.4) **\frm{-}, (10.5,9.) *+++{#1}}!R!(-1.5,0)*\xybox{
(1.5,0);(4.5,18) **i@{-},(3,18);(3,0)  **\crv{(3,5.4)&(3,12.6)},
}!R!(-1.5,0)}}}\newcommand{\xyJonesWenzlIdempotentSeven}[1]{\xybox{(0,-9.)*\xybox{
(1.5,0);(22.5,18) **i@{-},(3,18);(3,14.4) **@{-}, (3,3.6);(3,0) **@{-},(6,18);(6,14.4) **@{-}, (6,3.6);(6,0) **@{-},(9,18);(9,14.4) **@{-}, (9,3.6);(9,0) **@{-},(15,18);(15,14.4) **@{-}, (15,3.6);(15,0) **@{-},(18,18);(18,14.4) **@{-}, (18,3.6);(18,0) **@{-},(21,18);(21,14.4) **@{-}, (21,3.6);(21,0) **@{-},(12,16.2) *h+++{\cdots},(12,1.8) *+++{\cdots},(2.25,3.6);(21.75,14.4) **\frm{-}, (12.,9.) *+++{#1}}}}\newcommand{\xyWenzlRecurrenceLastTermSixPlusOne}[1]{\xybox{(0,-20.)*\xybox{*\xybox{*\xybox{
(1.5,0);(19.5,18) **i@{-},(3,18);(3,14.4) **@{-}, (3,3.6);(3,0) **@{-},(6,18);(6,14.4) **@{-}, (6,3.6);(6,0) **@{-},(9,18);(9,14.4) **@{-}, (9,3.6);(9,0) **@{-},(15,18);(15,14.4) **@{-}, (15,3.6);(15,0) **@{-},(18,18);(18,14.4) **@{-}, (18,3.6);(18,0) **@{-},(12,16.2) *h+++{\cdots},(12,1.8) *+++{\cdots},(2.25,3.6);(18.75,14.4) **\frm{-}, (10.5,9.) *+++{#1}}!R!(-1.5,0)*\xybox{
(1.5,0);(4.5,18) **i@{-},(3,18);(3,0)  **\crv{(3,5.4)&(3,12.6)},
}!R!(-1.5,0)}!U*\xybox{*\xybox{
(1.5,0);(10.5,4) **i@{-},(3,4);(3,0)  **\crv{(3,1.2)&(3,2.8)},
(6,4);(6,0)  **\crv{(6,1.2)&(6,2.8)},
(9,4);(9,0)  **\crv{(9,1.2)&(9,2.8)},
}!R!(-1.5,0)*\xybox{*i\xybox{
(1.5,0);(4.5,4) **i@{-},(3,4);(3,0)  **\crv{(3,1.2)&(3,2.8)},
}}!R!(-1.5,0)*\xybox{
(1.5,0);(10.5,4) **i@{-},(3,4);(3,0)  **\crv{(3,1.2)&(3,2.8)},
(6,4);(9,4)  **\crv{(6,2.2)&(9,2.2)},
(9,0);(6,0)  **\crv{(9,1.8)&(6,1.8)},
}!R!(-1.5,0)}!U*\xybox{*\xybox{
(1.5,0);(19.5,18) **i@{-},(3,18);(3,14.4) **@{-}, (3,3.6);(3,0) **@{-},(6,18);(6,14.4) **@{-}, (6,3.6);(6,0) **@{-},(9,18);(9,14.4) **@{-}, (9,3.6);(9,0) **@{-},(15,18);(15,14.4) **@{-}, (15,3.6);(15,0) **@{-},(18,18);(18,14.4) **@{-}, (18,3.6);(18,0) **@{-},(12,16.2) *h+++{\cdots},(12,1.8) *+++{\cdots},(2.25,3.6);(18.75,14.4) **\frm{-}, (10.5,9.) *+++{#1}}!R!(-1.5,0)*\xybox{
(1.5,0);(4.5,18) **i@{-},(3,18);(3,0)  **\crv{(3,5.4)&(3,12.6)},
}!R!(-1.5,0)}!U}}}\newcommand{\xySingleRightCupSixOne}{\xybox{
(1.5,0);(19.5,18) **i@{-},(3,18);(9,0)  **\crv{(3,5.4)&(9,12.6)},
(6,18);(12,0)  **\crv{(6,5.4)&(12,12.6)},
(9,18);(15,0)  **\crv{(9,5.4)&(15,12.6)},
(12,18);(18,0)  **\crv{(12,5.4)&(18,12.6)},
(15,18);(18,18)  **\crv{(15,16.2)&(18,16.2)},
(6,0);(3,0)  **\crv{(6,1.8)&(3,1.8)},
}}\newcommand{\xySingleRightCupSixTwo}{\xybox{
(1.5,0);(19.5,18) **i@{-},(3,18);(3,0)  **\crv{(3,5.4)&(3,12.6)},
(6,18);(12,0)  **\crv{(6,5.4)&(12,12.6)},
(9,18);(15,0)  **\crv{(9,5.4)&(15,12.6)},
(12,18);(18,0)  **\crv{(12,5.4)&(18,12.6)},
(15,18);(18,18)  **\crv{(15,16.2)&(18,16.2)},
(9,0);(6,0)  **\crv{(9,1.8)&(6,1.8)},
}}\newcommand{\xySingleRightCupSixThree}{\xybox{
(1.5,0);(19.5,18) **i@{-},(3,18);(3,0)  **\crv{(3,5.4)&(3,12.6)},
(6,18);(6,0)  **\crv{(6,5.4)&(6,12.6)},
(9,18);(15,0)  **\crv{(9,5.4)&(15,12.6)},
(12,18);(18,0)  **\crv{(12,5.4)&(18,12.6)},
(15,18);(18,18)  **\crv{(15,16.2)&(18,16.2)},
(12,0);(9,0)  **\crv{(12,1.8)&(9,1.8)},
}}\newcommand{\xySingleRightCupSixFour}{\xybox{
(1.5,0);(19.5,18) **i@{-},(3,18);(3,0)  **\crv{(3,5.4)&(3,12.6)},
(6,18);(6,0)  **\crv{(6,5.4)&(6,12.6)},
(9,18);(9,0)  **\crv{(9,5.4)&(9,12.6)},
(12,18);(18,0)  **\crv{(12,5.4)&(18,12.6)},
(15,18);(18,18)  **\crv{(15,16.2)&(18,16.2)},
(15,0);(12,0)  **\crv{(15,1.8)&(12,1.8)},
}}\newcommand{\xySingleRightCupSixFive}{\xybox{
(1.5,0);(19.5,18) **i@{-},(3,18);(3,0)  **\crv{(3,5.4)&(3,12.6)},
(6,18);(6,0)  **\crv{(6,5.4)&(6,12.6)},
(9,18);(9,0)  **\crv{(9,5.4)&(9,12.6)},
(12,18);(12,0)  **\crv{(12,5.4)&(12,12.6)},
(15,18);(18,18)  **\crv{(15,16.2)&(18,16.2)},
(18,0);(15,0)  **\crv{(18,1.8)&(15,1.8)},
}}\newcommand{\xySingleRightCupSixSix}{\xybox{
(1.5,0);(19.5,18) **i@{-},(3,18);(3,0)  **\crv{(3,5.4)&(3,12.6)},
(6,18);(6,0)  **\crv{(6,5.4)&(6,12.6)},
(9,18);(9,0)  **\crv{(9,5.4)&(9,12.6)},
(12,18);(12,0)  **\crv{(12,5.4)&(12,12.6)},
(15,18);(15,0)  **\crv{(15,5.4)&(15,12.6)},
(18,18);(18,0)  **\crv{(18,5.4)&(18,12.6)},
}}\newcommand{\xyMultiplicativeGeneratorSixFiveTimesGFiveThree}{\xybox{(0,-9.)*\xybox{*\xybox{*\xybox{
(1.5,0);(16.5,9) **i@{-},(3,9);(3,0)  **\crv{(3,2.7)&(3,6.3)},
(6,9);(6,0)  **\crv{(6,2.7)&(6,6.3)},
(9,9);(15,0)  **\crv{(9,2.7)&(15,6.3)},
(12,9);(15,9)  **\crv{(12,7.2)&(15,7.2)},
(12,0);(9,0)  **\crv{(12,1.8)&(9,1.8)},
}!R!(-1.5,0)*\xybox{
(1.5,0);(4.5,9) **i@{-},(3,9);(3,0)  **\crv{(3,2.7)&(3,6.3)},
}!R!(-1.5,0)}!U*\xybox{
(1.5,0);(19.5,9) **i@{-},(3,9);(3,0)  **\crv{(3,2.7)&(3,6.3)},
(6,9);(6,0)  **\crv{(6,2.7)&(6,6.3)},
(9,9);(9,0)  **\crv{(9,2.7)&(9,6.3)},
(12,9);(12,0)  **\crv{(12,2.7)&(12,6.3)},
(15,9);(18,9)  **\crv{(15,7.2)&(18,7.2)},
(18,0);(15,0)  **\crv{(18,1.8)&(15,1.8)},
}!U}}}\newcommand{\xyGSixThree}{\xybox{(0,-9.)*\xybox{
(1.5,0);(19.5,18) **i@{-},(3,18);(3,0)  **\crv{(3,5.4)&(3,12.6)},
(6,18);(6,0)  **\crv{(6,5.4)&(6,12.6)},
(9,18);(15,0)  **\crv{(9,5.4)&(15,12.6)},
(12,18);(18,0)  **\crv{(12,5.4)&(18,12.6)},
(15,18);(18,18)  **\crv{(15,16.2)&(18,16.2)},
(12,0);(9,0)  **\crv{(12,1.8)&(9,1.8)},
}}}\newcommand{\xyCaseOne}{\xybox{(0,-12.)*\xybox{*\xybox{
(1.5,0);(16.5,12) **i@{-},(3,12);(3,0)  **\crv{(3,3.6)&(3,8.4)},
(6,12);(12,0)  **\crv{(6,3.6)&(12,8.4)},
(9,12);(15,0)  **\crv{(9,3.6)&(15,8.4)},
(12,12);(15,12)  **\crv{(12,10.2)&(15,10.2)},
(9,0);(6,0)  **\crv{(9,1.8)&(6,1.8)},
}!U*\xybox{*\xybox{*\xybox{
(1.5,0);(13.5,12) **i@{-},(3,12);(3,9.6) **@{-}, (3,2.4);(3,0) **@{-},(6,12);(6,9.6) **@{-}, (6,2.4);(6,0) **@{-},(9,12);(9,9.6) **@{-}, (9,2.4);(9,0) **@{-},(12,12);(12,9.6) **@{-}, (12,2.4);(12,0) **@{-},(2.25,2.4);(12.75,9.6) **\frm{-}, (7.5,6.) *+++{h}},(6,2.4);(9,2.4) **h\crv{(6,4)&(9,4)}}!R!(-1.5,0)*\xybox{
(1.5,0);(4.5,12) **i@{-},(3,12);(3,0)  **\crv{(3,3.6)&(3,8.4)},
}!R!(-1.5,0)}!U}}}\newcommand{\xyCaseTwo}{\xybox{(0,-12.)*\xybox{*\xybox{
(1.5,0);(16.5,12) **i@{-},(3,12);(3,0)  **\crv{(3,3.6)&(3,8.4)},
(6,12);(12,0)  **\crv{(6,3.6)&(12,8.4)},
(9,12);(15,0)  **\crv{(9,3.6)&(15,8.4)},
(12,12);(15,12)  **\crv{(12,10.2)&(15,10.2)},
(9,0);(6,0)  **\crv{(9,1.8)&(6,1.8)},
}!U*\xybox{*\xybox{*\xybox{
(1.5,0);(13.5,12) **i@{-},(3,12);(3,9.6) **@{-}, (3,2.4);(3,0) **@{-},(6,12);(6,9.6) **@{-}, (6,2.4);(6,0) **@{-},(9,12);(9,9.6) **@{-}, (9,2.4);(9,0) **@{-},(12,12);(12,9.6) **@{-}, (12,2.4);(12,0) **@{-},(2.25,2.4);(12.75,9.6) **\frm{-}, (7.5,6.) *+++{h}},(9,2.4);(12,2.4) **h\crv{(9,4)&(12,4)}}!R!(-1.5,0)*\xybox{
(1.5,0);(4.5,12) **i@{-},(3,12);(3,0)  **\crv{(3,3.6)&(3,8.4)},
}!R!(-1.5,0)}!U}}}\newcommand{\xyTLSixSeventyNine}{ \xybox{(0,-9.)*\xybox{
(1.5,0);(19.5,18) **i@{-},(3,18);(12,18)  **\crv{(3,12.6)&(12,12.6)},
(6,18);(9,18)  **\crv{(6,16.2)&(9,16.2)},
(15,18);(3,0)  **\crv{(15,5.4)&(3,12.6)},
(18,18);(12,0)  **\crv{(18,5.4)&(12,12.6)},
(18,0);(15,0)  **\crv{(18,1.8)&(15,1.8)},
(9,0);(6,0)  **\crv{(9,1.8)&(6,1.8)},
}}}\newcommand{\xyTLSixEightyOneAndCap}{ \xybox{(0,-9.)*\xybox{*\xybox{
(1.5,0);(19.5,18) **i@{-},(3,18);(12,18)  **\crv{(3,12.6)&(12,12.6)},
(6,18);(9,18)  **\crv{(6,16.2)&(9,16.2)},
(15,18);(3,0)  **\crv{(15,5.4)&(3,12.6)},
(18,18);(18,0)  **\crv{(18,5.4)&(18,12.6)},
(15,0);(6,0)  **\crv{(15,5.4)&(6,5.4)},
(12,0);(9,0)  **\crv{(12,1.8)&(9,1.8)},
}!R!(-1.5,0)*\xybox{
(1.5,0);(7.5,18) **i@{-},(6,0);(3,0)  **\crv{(6,1.8)&(3,1.8)},
}!R!(-1.5,0)}}}\newcommand{\xyTLSixSeventyNineTimesGSevenThree}{\xybox{(0,-13.)*\xybox{*\xybox{
(1.5,0);(22.5,13) **i@{-},(3,13);(3,0)  **\crv{(3,3.9)&(3,9.1)},
(6,13);(6,0)  **\crv{(6,3.9)&(6,9.1)},
(9,13);(15,0)  **\crv{(9,3.9)&(15,9.1)},
(12,13);(18,0)  **\crv{(12,3.9)&(18,9.1)},
(15,13);(21,0)  **\crv{(15,3.9)&(21,9.1)},
(18,13);(21,13)  **\crv{(18,11.2)&(21,11.2)},
(12,0);(9,0)  **\crv{(12,1.8)&(9,1.8)},
}!U*\xybox{*\xybox{
(1.5,0);(19.5,13) **i@{-},(3,13);(12,13)  **\crv{(3,7.6)&(12,7.6)},
(6,13);(9,13)  **\crv{(6,11.2)&(9,11.2)},
(15,13);(3,0)  **\crv{(15,3.9)&(3,9.1)},
(18,13);(12,0)  **\crv{(18,3.9)&(12,9.1)},
(18,0);(15,0)  **\crv{(18,1.8)&(15,1.8)},
(9,0);(6,0)  **\crv{(9,1.8)&(6,1.8)},
}!R!(-1.5,0)*\xybox{
(1.5,0);(4.5,13) **i@{-},(3,13);(3,0)  **\crv{(3,3.9)&(3,9.1)},
}!R!(-1.5,0)}!U}}}\newcommand{\xyTLSevenTwoHundredAndSixtyFive}{ \xybox{(0,-9.)*\xybox{
(1.5,0);(22.5,18) **i@{-},(3,18);(12,18)  **\crv{(3,12.6)&(12,12.6)},
(6,18);(9,18)  **\crv{(6,16.2)&(9,16.2)},
(15,18);(3,0)  **\crv{(15,5.4)&(3,12.6)},
(18,18);(18,0)  **\crv{(18,5.4)&(18,12.6)},
(21,18);(21,0)  **\crv{(21,5.4)&(21,12.6)},
(15,0);(6,0)  **\crv{(15,5.4)&(6,5.4)},
(12,0);(9,0)  **\crv{(12,1.8)&(9,1.8)},
}}}\newcommand{\xyTLFiveEight}{ \xybox{(0,-4.)*\xybox{
(0.75,0);(8.25,8) **i@{-},(1.5,8);(1.5,0)  **\crv{(1.5,2.4)&(1.5,5.6)},
(3.,8);(6.,0)  **\crv{(3.,2.4)&(6.,5.6)},
(4.5,8);(6.,8)  **\crv{(4.5,7.1)&(6.,7.1)},
(7.5,8);(7.5,0)  **\crv{(7.5,2.4)&(7.5,5.6)},
(4.5,0);(3.,0)  **\crv{(4.5,0.9)&(3.,0.9)},
}}}\newcommand{\xyFoldDownTLFiveEight}{ \xybox{(0,-4.)*\xybox{*\xybox{
(0.75,0);(6.75,8) **i@{-},(1.5,8);(1.5,0)  **\crv{(1.5,2.4)&(1.5,5.6)},
(3.,8);(6.,0)  **\crv{(3.,2.4)&(6.,5.6)},
(4.5,8);(6.,8)  **\crv{(4.5,7.1)&(6.,7.1)},
(4.5,0);(3.,0)  **\crv{(4.5,0.9)&(3.,0.9)},
}!R!(-1.5,0)*\xybox{
(0.75,0);(3.75,8) **i@{-},(3.,0);(1.5,0)  **\crv{(3.,0.9)&(1.5,0.9)},
}!R!(-1.5,0)}}}\newcommand{\xyTLFourTwo}{ \xybox{(0,-4.)*\xybox{
(0.75,0);(6.75,8) **i@{-},(1.5,8);(1.5,0)  **\crv{(1.5,2.4)&(1.5,5.6)},
(3.,8);(3.,0)  **\crv{(3.,2.4)&(3.,5.6)},
(4.5,8);(6.,8)  **\crv{(4.5,7.1)&(6.,7.1)},
(6.,0);(4.5,0)  **\crv{(6.,0.9)&(4.5,0.9)},
}}}\newcommand{\xyTLFourThree}{ \xybox{(0,-4.)*\xybox{
(0.75,0);(6.75,8) **i@{-},(1.5,8);(1.5,0)  **\crv{(1.5,2.4)&(1.5,5.6)},
(3.,8);(6.,0)  **\crv{(3.,2.4)&(6.,5.6)},
(4.5,8);(6.,8)  **\crv{(4.5,7.1)&(6.,7.1)},
(4.5,0);(3.,0)  **\crv{(4.5,0.9)&(3.,0.9)},
}}}\newcommand{\xyTLThreeOne}{ \xybox{(0,-4.)*\xybox{
(0.75,0);(5.25,8) **i@{-},(1.5,8);(1.5,0)  **\crv{(1.5,2.4)&(1.5,5.6)},
(3.,8);(3.,0)  **\crv{(3.,2.4)&(3.,5.6)},
(4.5,8);(4.5,0)  **\crv{(4.5,2.4)&(4.5,5.6)},
}}}\newcommand{\xyFoldDownTLFourTwo}{ \xybox{(0,-4.)*\xybox{
(0.75,0);(8.25,8) **i@{-},(1.5,8);(1.5,0)  **\crv{(1.5,2.4)&(1.5,5.6)},
(3.,8);(3.,0)  **\crv{(3.,2.4)&(3.,5.6)},
(4.5,8);(7.5,0)  **\crv{(4.5,2.4)&(7.5,5.6)},
(6.,0);(4.5,0)  **\crv{(6.,0.9)&(4.5,0.9)},
}}}\newcommand{\xyFoldDownTLFourThree}{ \xybox{(0,-4.)*\xybox{
(0.75,0);(8.25,8) **i@{-},(1.5,8);(1.5,0)  **\crv{(1.5,2.4)&(1.5,5.6)},
(3.,8);(6.,0)  **\crv{(3.,2.4)&(6.,5.6)},
(4.5,8);(7.5,0)  **\crv{(4.5,2.4)&(7.5,5.6)},
(4.5,0);(3.,0)  **\crv{(4.5,0.9)&(3.,0.9)},
}}}\newcommand{\xyTLFiveEightFullSize}{ \xybox{(0,-9.)*\xybox{
(1.5,0);(16.5,18) **i@{-},(3,18);(3,0)  **\crv{(3,5.4)&(3,12.6)},
(6,18);(12,0)  **\crv{(6,5.4)&(12,12.6)},
(9,18);(12,18)  **\crv{(9,16.2)&(12,16.2)},
(15,18);(15,0)  **\crv{(15,5.4)&(15,12.6)},
(9,0);(6,0)  **\crv{(9,1.8)&(6,1.8)},
}}}\newcommand{\xyCapFormTLFiveEight}{ \xybox{(0,-9.)*\xybox{
(1.5,0);(31.5,18) **i@{-},(30,0);(3,0)  **\crv{(30,16.2)&(3,16.2)},
(27,0);(12,0)  **\crv{(27,9.)&(12,9.)},
(24,0);(21,0)  **\crv{(24,1.8)&(21,1.8)},
(18,0);(15,0)  **\crv{(18,1.8)&(15,1.8)},
(9,0);(6,0)  **\crv{(9,1.8)&(6,1.8)},
}}}\newcommand{\xyTLMoveInitial}{ \xybox{(0,-11.)*\xybox{
(1.5,0);(31.5,22) **i@{-},(3,22);(6,22)  **\crv{(3,20.2)&(6,20.2)},
(9,22);(12,22)  **\crv{(9,20.2)&(12,20.2)},
(15,22);(18,22)  **\crv{(15,20.2)&(18,20.2)},
(21,22);(24,22)  **\crv{(21,20.2)&(24,20.2)},
(27,22);(3,0)  **\crv{(27,6.6)&(3,15.4)},
(30,22);(30,0)  **\crv{(30,6.6)&(30,15.4)},
(27,0);(6,0)  **\crv{(27,12.6)&(6,12.6)},
(24,0);(9,0)  **\crv{(24,9.)&(9,9.)},
(21,0);(12,0)  **\crv{(21,5.4)&(12,5.4)},
(18,0);(15,0)  **\crv{(18,1.8)&(15,1.8)},
}}}\newcommand{\xyTLMoveFinalOne}{ \xybox{(0,-11.)*\xybox{
(1.5,0);(31.5,22) **i@{-},(3,22);(6,22)  **\crv{(3,20.2)&(6,20.2)},
(9,22);(12,22)  **\crv{(9,20.2)&(12,20.2)},
(15,22);(18,22)  **\crv{(15,20.2)&(18,20.2)},
(21,22);(24,22)  **\crv{(21,20.2)&(24,20.2)},
(27,22);(3,0)  **\crv{(27,6.6)&(3,15.4)},
(30,22);(30,0)  **\crv{(30,6.6)&(30,15.4)},
(27,0);(12,0)  **\crv{(27,9.)&(12,9.)},
(24,0);(15,0)  **\crv{(24,5.4)&(15,5.4)},
(21,0);(18,0)  **\crv{(21,1.8)&(18,1.8)},
(9,0);(6,0)  **\crv{(9,1.8)&(6,1.8)},
}}}\newcommand{\xyTLThreeOneFullSize}{ \xybox{(0,-9.)*\xybox{
(1.5,0);(10.5,18) **i@{-},(3,18);(3,0)  **\crv{(3,5.4)&(3,12.6)},
(6,18);(6,0)  **\crv{(6,5.4)&(6,12.6)},
(9,18);(9,0)  **\crv{(9,5.4)&(9,12.6)},
}}}\newcommand{\xyTLThreeTwoFullSize}{ \xybox{(0,-9.)*\xybox{
(1.5,0);(10.5,18) **i@{-},(3,18);(3,0)  **\crv{(3,5.4)&(3,12.6)},
(6,18);(9,18)  **\crv{(6,16.2)&(9,16.2)},
(9,0);(6,0)  **\crv{(9,1.8)&(6,1.8)},
}}}\newcommand{\xyTLThreeThreeFullSize}{ \xybox{(0,-9.)*\xybox{
(1.5,0);(10.5,18) **i@{-},(3,18);(9,0)  **\crv{(3,5.4)&(9,12.6)},
(6,18);(9,18)  **\crv{(6,16.2)&(9,16.2)},
(6,0);(3,0)  **\crv{(6,1.8)&(3,1.8)},
}}}\newcommand{\xyTLThreeFourFullSize}{ \xybox{(0,-9.)*\xybox{
(1.5,0);(10.5,18) **i@{-},(3,18);(6,18)  **\crv{(3,16.2)&(6,16.2)},
(9,18);(3,0)  **\crv{(9,5.4)&(3,12.6)},
(9,0);(6,0)  **\crv{(9,1.8)&(6,1.8)},
}}}\newcommand{\xyTLThreeFiveFullSize}{ \xybox{(0,-9.)*\xybox{
(1.5,0);(10.5,18) **i@{-},(3,18);(6,18)  **\crv{(3,16.2)&(6,16.2)},
(9,18);(9,0)  **\crv{(9,5.4)&(9,12.6)},
(6,0);(3,0)  **\crv{(6,1.8)&(3,1.8)},
}}}
\author{Scott Morrison}
\title{A Formula for the Jones-Wenzl Projections}
\date{}
\begin{document}

%\include{Introduction}

%% remember; writing for the arxiv.
\begin{abstract}
I present a method of calculating the coefficients appearing in the
Jones-Wenzl projections in the Temperley-Lieb algebras. It
essentially repeats the approach of Frenkel and Khovanov in
\cite{MR1446615} published in 1997. I wrote this note mid-2002,
not knowing about their work, but then set it aside upon discovering
their article.

Recently I decided to dust it off and place it on the arXiv ---
hoping the self-contained and detailed proof I give here may be
useful. It's also been cited a number of times,
%\cite{1102.2052,0902.1294,MR2873427,MR2979509,1301.1733,1302.5148,1308.2369,1308.5197,1312.5254,1501.04672,1502.06543},
so I thought it best to give it a permanent home.

The proof is based upon a simplification of the Wenzl recurrence
relation. I give an example calculation, and compare this method to
the formula announced by Ocneanu \cite{MR1907188} and partially
proved by Reznikoff \cite{MR2375712}. I also describe certain
moves on diagrams which modify their coefficients in a simple way.
\end{abstract}

\maketitle

\section{Basic Definitions}

The \defnemph{quantum integers} are denoted by $[n]$, and are
given in terms of the formal \defnemph{quantum parameter} $q$ by
the formula
\[[n] = q^{n-1} + q^{n-3} + \cdots + q^{-(n-1)} =
\frac{q^n-q^{-n}}{q-q^{-1}}.\]

The quantum integers satisfy many relations, all of which reduce
to simple arithmetic relations when evaluated at $q=1$. For
example, a simple result we will need later is
\begin{lem}\label{lem:quantum-integer-lemma}
If $m \geq a$, then $[m-a]+[m+1][a] = [m][a+1]$.
\end{lem}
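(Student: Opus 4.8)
The plan is to prove the identity by substituting the closed formula $[n] = (q^n - q^{-n})/(q - q^{-1})$ and reducing everything to a single Laurent-polynomial identity in $q$. Since the claimed relation is a genuine algebraic identity, no structural insight is required beyond careful bookkeeping. Write $\delta = q - q^{-1}$, so that $[n] = (q^n - q^{-n})/\delta$. The two product terms $[m+1][a]$ and $[m][a+1]$ each acquire a factor $\delta^{-2}$, whereas $[m-a]$ carries only $\delta^{-1}$; so after multiplying through by $\delta^2$ it suffices to compare numerators.

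First I would expand the two products. One finds
\[(q^{m+1}-q^{-(m+1)})(q^a - q^{-a}) = q^{m+a+1} - q^{m-a+1} - q^{-m+a-1} + q^{-m-a-1}\]
and
\[(q^m - q^{-m})(q^{a+1} - q^{-(a+1)}) = q^{m+a+1} - q^{m-a-1} - q^{-m+a+1} + q^{-m-a-1}.\]
The key step is then to subtract these: the extreme terms $q^{m+a+1}$ and $q^{-m-a-1}$ cancel in pairs, and the four surviving middle terms regroup as $(q^{m-a+1}-q^{m-a-1}) - (q^{-m+a+1}-q^{-m+a-1}) = \delta\bigl(q^{m-a} - q^{-(m-a)}\bigr)$. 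Dividing by $\delta^2$ collapses this to $(q^{m-a} - q^{-(m-a)})/\delta = [m-a]$, which is precisely $[m][a+1] - [m+1][a]$, giving the claim.

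I expect the only real obstacle to be the sign- and exponent-bookkeeping in that subtraction, since several of the monomials look alike and it is easy to misgroup them; there is no conceptual difficulty. It is worth noting that the algebraic identity in fact holds for all integers $m$ and $a$; the hypothesis $m \geq a$ is needed only so that $m - a \geq 0$ and $[m-a]$ is a bona fide quantum integer of nonnegative index in the sense used above. As an alternative I could instead induct on $a$, with base case $a = 0$ immediate from $[0] = 0$ and $[1] = 1$, and the inductive step driven by the recurrence $[k+1] = [2][k] - [k-1]$; but the direct substitution is shorter and avoids invoking the recurrence at all.
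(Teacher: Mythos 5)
Your proof is correct. Note first that the paper itself offers no proof of this lemma --- it is stated as ``a simple result we will need later'' and left to the reader --- so there is nothing to compare against; your direct verification is exactly the routine computation the author suppresses. Checking the details: both expansions are accurate, the extreme terms $q^{m+a+1}$ and $q^{-m-a-1}$ do cancel in the subtraction, and the regrouping
\begin{equation*}
\left(q^{m-a+1}-q^{m-a-1}\right) - \left(q^{-m+a+1}-q^{-m+a-1}\right) = \left(q-q^{-1}\right)\left(q^{m-a}-q^{-(m-a)}\right)
\end{equation*}
is right, since $q^{m-a+1}-q^{m-a-1}=q^{m-a}\delta$ and $q^{-m+a+1}-q^{-m+a-1}=q^{-(m-a)}\delta$; dividing the numerator identity by $\delta^2$ gives $[m][a+1]-[m+1][a]=[m-a]$ as claimed. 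Your side remark is also correct and worth keeping: the computation nowhere uses $m \geq a$, so the relation holds as an identity of Laurent polynomials for all integers $m$ and $a$ (with $[-n]=-[n]$ under the closed formula), and the hypothesis serves only to make $[m-a]$ a quantum integer of nonnegative index as the paper uses them. The alternative induction on $a$ you sketch would also work but, as you say, buys nothing here.
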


An \defnemph{$n$ strand Temperley-Lieb diagram} is a diagram drawn
inside a rectangle with $n$ marked points on both the upper and
lower edges, with non-intersecting arcs joining these points. We
consider isotopic diagrams as equivalent. A \defnemph{through
strand} is an arc joining a point on the upper edge of a diagram
to the lower edge. A \defnemph{cup} joins a point on the upper
edge with another point on the upper edge, and similarly a
\defnemph{cap} joins the lower edge to itself. A cap or cup is
called \defnemph{innermost} if it is exactly that --- there are no
nested caps or cups inside it. This terminology is illustrated in
Figure \ref{fig:diagram-terminology}.

\begin{figure}[!hbp]\label{fig:diagram-terminology}
\[\xybox{ (1.5,0);(28.5,18) **i@{-},(3,18);(3,0)
**\crv{(3,5.4)&(3,12.6)}, (6,18);(12,0) **\crv{(6,5.4)&(12,12.6)},
(9,18);(15,0) **\crv{(9,5.4)&(15,12.6)}, (12,18);(21,18)
**\crv{(12,12.6)&(21,12.6)}, (15,18);(18,18)
**\crv{(15,16.2)&(18,16.2)}, (24,18);(27,18)
**\crv{(24,16.2)&(27,16.2)}, (27,0);(18,0)
**\crv{(27,5.4)&(18,5.4)}, (24,0);(21,0)
**\crv{(24,1.8)&(21,1.8)}, (9,0);(6,0)  **\crv{(9,1.8)&(6,1.8)},
\POS(-9,4)*{\txt{\small through\\strands}} \ar @{-->}@/_/+(11,5)
\ar @{-->}@/_/+(17,5) \ar @{-->}@/_/+(20,5),
\POS(20,-6)*{\txt{\small a non-innermost cap}}
\ar@{-->}@/^/+(-2,4), \POS(40,11)*{\txt{\small innermost\\cups}}
\ar@{-->}@/^/+(-22,5) \ar@{-->}@/_/+(-11,7)}\]\caption{}
\end{figure}

\noop{\xybox{ (2.5,0);(47.5,30) **i@{-},(5,30);(5,0)
**\crv{(5,9.)&(5,21.)}, (10,30);(20,0)  **\crv{(10,9.)&(20,21.)},
(15,30);(25,0)  **\crv{(15,9.)&(25,21.)}, (20,30);(35,30)
**\crv{(20,21.)&(35,21.)}, (25,30);(30,30)
**\crv{(25,27.)&(30,27.)}, (40,30);(45,30)
**\crv{(40,27.)&(45,27.)}, (45,0);(30,0)  **\crv{(45,9.)&(30,9.)},
(40,0);(35,0)  **\crv{(40,3.)&(35,3.)}, (15,0);(10,0)
**\crv{(15,3.)&(10,3.)}, \POS(-15,7)*{\txt{through\\strands}} \ar
@{-->}@/_/+(19,7) \ar @{-->}@/_/+(29,7) \ar @{-->}@/_/+(34,7),
\POS(32,-10)*{\txt{a non-innermost cap}} \ar@{-->}@/^/+(-2,8),
\POS(30,44)*{\txt{innermost\\cups}} \ar@{-->}@/_/+(0,-12)
\ar@{-->}@/^/+(10,-12)}}

The \defnemph{$n$ strand Temperley-Lieb algebra}, denoted $TL_n$,
is the algebra over $\Complex(q)$ spanned by the Temperley-Lieb
diagrams, with multiplication defined on this basis by stacking
diagrams. In such a product of diagrams closed loops may appear,
each of which we remove while inserting an additional factor of
$-[2]$. Two quite different sign conventions appear in the
literature. Generally, in topological applications loops are given
the value $-[2]$, but in the theory of subfactors the value $[2]$.
I have employed the present convention, because it results in
simpler formulas, with all coefficients positive. To pass between
the two conventions, replace everywhere $[i]$ with $(-1)^{i+1}
[i]$, or equivalently $q$ with $-q$.

Figure \ref{fig:TL-multiplication} illustrates multiplication in
the $5$ strand algebra.

\begin{figure}[!hbp]
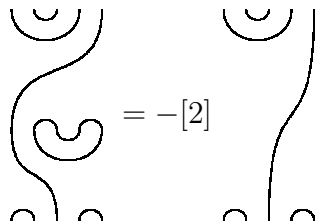

\[
\xyTLFiveTwentyFiveTimesTLFiveTwelve = - [2] \xyTLFiveTwentySix
\]
\caption{A calculation in the $5$ strand Temperley-Lieb algebra.}
\label{fig:TL-multiplication}
\end{figure}

We can also define vector spaces $TL_{n,m}$, spanned by isotopy
classes of diagrams with $m$ points on the lower boundary of the
rectangle, and $n$ along the top. These fit together into a
monoidal category \cite{MR1366832,MR1300632} over
$\Complex(q)$, with objects in $\Natural$, and $TL_{n,m}$ giving
the morphisms from $m$ to $n$.

Equivalently, we can give a definition of the Temperley-Lieb
algebra in terms of generators and relations
\cite{MR1134131}.\footnote{For the relationship between the
diagrammatic algebra and `generators and relations' algebra when
the formal parameter $q$ has been evaluated at a complex root of
unity, see \cite{MR1961959,MR2274519}.} Define the
\defnemph{multiplicative generator} $e_i$ ($i=1,\ldots,n-1$) as the diagram with
$i-1$ vertical strands, a cap-cup pair, then $n-i-1$ more vertical
strands. Figure \ref{fig:multiplicative-generators} illustrates
the multiplicative generators in the $5$ strand algebra.

\begin{figure}[!hbp]
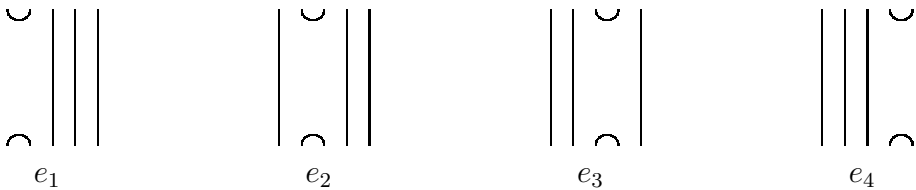

\center
\begin{tabular}{ccccccc}
\mbox{\xyMultiplicativeGeneratorFiveOne } & \hspace{0.5in} &
\mbox{\xyMultiplicativeGeneratorFiveTwo } & \hspace{0.5in} &
\mbox{\xyMultiplicativeGeneratorFiveThree } & \hspace{0.5in} &
\mbox{\xyMultiplicativeGeneratorFiveFour }
\\
$e_1$ & & $e_2$ & & $e_3$ & & $e_4$
\end{tabular}
\caption{The multiplicative generators in the $5$ strand
Temperley-Lieb algebra.} \label{fig:multiplicative-generators}
\end{figure}

The Temperley-Lieb algebra is generated by these diagrams along
with the identity diagram, denoted $\Id$, subject to the relations
\begin{align*}
e_i e_i & = -[2] e_i \\
e_i e_{i \pm 1} e_i & = e_i \\
e_i e_j & = e_j e_i \qquad \text{if $\abs{i-j} \geq 2$.}
\end{align*}

Inside the Temperley-Lieb algebra $TL_n$ we have the two-sided
ideal $\mathcal{I}_n$, generated by the elements
$\set{e_1,\ldots,e_{n-1}}$. This ideal has codimension $1$; it is
spanned by diagrams with $n-2$ or fewer through strands, that is,
every diagram except the identity diagram.

\section{The Jones-Wenzl idempotent}

Inside the $n$ strand Temperley-Lieb algebra there is a special
element called the \defnemph{Jones-Wenzl idempotent}, denoted
$\JW{n}$. It is characterised by the properties
\begin{equation}
\label{eq:JW-characterisation}
\begin{split}
 \JW{n} & \neq 0 \\
 \JW{n} \JW{n} & = \JW{n} \\
 e_i \JW{n} = \JW{n} e_i & = 0 \qquad \forall i \in \set{1,\dots,n-1}.
\end{split}
\end{equation}

The second equation could be equivalently stated as $\mathcal{I}_n
\JW{n} = \JW{n} \mathcal{I}_n = 0$.

The aim of this work is to present new methods for calculating the
coefficients for each diagram appearing in the Jones-Wenzl
idempotent. The starting point will be the Wenzl recurrence
formula, allowing us to calculate $\JW{n+1}$ in terms of $\JW{n}$.

\begin{lem}
The coefficient of the identity diagram in a Jones-Wenzl
idempotent is always $1$.
\end{lem}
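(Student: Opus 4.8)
The plan is to exploit the codimension-one ideal $\mathcal{I}_n$ rather than to manipulate diagrams directly. First I would record the direct sum decomposition $TL_n = \Complex(q)\,\Id \oplus \mathcal{I}_n$, which follows immediately from the stated facts that $\mathcal{I}_n$ has codimension $1$ and is spanned by every basis diagram except $\Id$. Writing $\JW{n} = c\,\Id + x$ with $c \in \Complex(q)$ and $x \in \mathcal{I}_n$, the scalar $c$ is precisely the coefficient of the identity diagram that we wish to compute.

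Next I would pass to the quotient. Since $\mathcal{I}_n$ is a two-sided ideal, the projection $\pi \colon TL_n \to TL_n/\mathcal{I}_n$ is a ring homomorphism, and the quotient is one-dimensional, spanned by the image of $\Id$. Identifying $TL_n/\mathcal{I}_n \cong \Complex(q)$ by sending the class of $\Id$ to $1$, the homomorphism $\pi$ is exactly the map extracting the coefficient of $\Id$; in particular $\pi(\JW{n}) = c$ and $\pi(e_i) = 0$ for all $i$. Applying $\pi$ to the idempotent relation $\JW{n}\JW{n} = \JW{n}$ and using multiplicativity gives $c^2 = c$, so that $c \in \set{0,1}$.

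It remains to rule out $c = 0$, and here the absorbing property does the work. If $c = 0$ then $\JW{n} = x \in \mathcal{I}_n$, whence $\JW{n}\JW{n} \in \mathcal{I}_n\,\JW{n}$. But the characterisation $e_i\JW{n} = 0$ for all $i$ is equivalent to $\mathcal{I}_n\,\JW{n} = 0$, as already noted in the excerpt, so $\JW{n}\JW{n} = 0$. Combined with idempotency this forces $\JW{n} = 0$, contradicting $\JW{n} \neq 0$. Hence $c = 1$.

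The argument is short and I do not anticipate a serious obstacle; the only point requiring care is the identification of the ``coefficient of $\Id$'' functional with the quotient homomorphism $\pi$, which is what legitimises the passage from $\JW{n}\JW{n} = \JW{n}$ to $c^2 = c$. Everything else is a direct consequence of the three defining properties of $\JW{n}$ together with the structure of the ideal $\mathcal{I}_n$.
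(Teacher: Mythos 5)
Your proof is correct, and it begins from the same decomposition the paper uses --- writing $\JW{n} = c\,\Id + x$ with $x \in \mathcal{I}_n$ --- but the route through it is genuinely different. The paper never passes to the quotient: it multiplies $\JW{n}$ against this expression and absorbs the ideal part in one stroke, $\JW{n}\JW{n} = c\,\JW{n} + \JW{n}x = c\,\JW{n}$ (using $\JW{n}\,\mathcal{I}_n = 0$), so idempotency gives $(1-c)\JW{n}=0$ and $\JW{n}\neq 0$ forces $c=1$ directly, with no case analysis. You instead note that the coefficient-of-$\Id$ functional is the unital algebra homomorphism $\pi\colon TL_n \to TL_n/\mathcal{I}_n \cong \Complex(q)$, so idempotency alone yields only the quadratic relation $c^2=c$, and you then need a second step --- $\mathcal{I}_n\,\JW{n}=0$ together with $\JW{n}\neq 0$ --- to exclude $c=0$; that step is sound, since (as the paper remarks) the condition $e_i\JW{n}=0$ for all $i$ is equivalent to $\mathcal{I}_n\,\JW{n}=0$, because every non-identity diagram factors through some $e_i$. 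Both arguments draw on exactly the same three defining properties, so the difference is one of organisation rather than substance: your version is a step longer but cleanly separates what each axiom contributes (idempotency pins $c$ to $\set{0,1}$; absorption plus nonvanishing rules out $0$), and identifying the coefficient functional with $\pi$ is a tidy way to legitimise passing from $\JW{n}\JW{n}=\JW{n}$ to an equation in scalars. The paper's version buys brevity: by leaving one factor of $\JW{n}$ unexpanded it obtains a linear rather than quadratic equation in $c$. One small point in your favour: the scalar indeed lives in $\Complex(q)$, as you write, whereas the paper's proof says $\alpha\in\Complex$ --- a harmless slip there.
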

\begin{proof}
Write $\JW{n} = \alpha \Id + g$, with $\alpha \in \Complex$ and $g
\in \mathcal{I}_n$. We want to see that $\alpha = 1$. This follows
from $\JW{n} \JW{n} = \JW{n} \alpha \Id + \JW{n} g = \alpha \JW{n}
+ 0$, so $\alpha = 1$.
\end{proof}

\begin{lem}
The Jones-Wenzl idempotent, characterised by Equation
\ref{eq:JW-characterisation}, is unique.
\end{lem}
\begin{proof}
Suppose both $f_1^{n}$ and $f_2^{n}$ satisfy Equation
\ref{eq:JW-characterisation}. Write $f_1^{n} = \Id + g_1$ and
$f_2^{n}=\Id+g_2$, where $g_1,g_2 \in \mathcal{I}_n$. Then
$f_1^{n} f_2^{n} = f_1^{n} (\Id+g_2) = f_1^{n}$ and similarly
$f_1^{n} f_2^{n} = (\Id+g_1)f_2^{n} = f_2^{n}$.
Thus $f_1^{n} = f_2^{n}$.
\end{proof}

For example, the $3$ strand idempotent is

\begin{equation*}
\JW{3} = \xyTLThreeOneFullSize + \frac{[2]}{[3]}
\xyTLThreeTwoFullSize + \frac{[2]}{[3]} \xyTLThreeFiveFullSize +
\frac{1}{[3]} \xyTLThreeThreeFullSize + \frac{1}{[3]}
\xyTLThreeFourFullSize
\end{equation*}

The $n$ strand Temperley-Lieb algebra naturally includes into the
$n+1$ strand algebra, by adding a vertical strand to the right
side of the diagram. Taking advantage of this, we abuse notation
and write $\JW{n} \in TL_{n+1}$ to mean the $n$ strand Jones-Wenzl
idempotent, with a vertical strand added to the right, living in
the $n+1$ strand algebra.

\begin{prop}[Wenzl recurrence formula]
The Jones-Wenzl idempotent satisfies
\begin{equation}\label{eq:wenzl-formula}
\JW{n+1} = \JW{n} + \frac{[n]}{[n+1]}\JW{n} e_{n} \JW{n},
\end{equation}
or, diagrammatically,
\[
\xyJonesWenzlIdempotentSeven{\JW{n+1}} =
\xyJonesWenzlIdempotentSixPlusOne{\JW{n}} + \frac{[n]}{[n+1]}
\xyWenzlRecurrenceLastTermSixPlusOne{\JW{n}}.
\]
\end{prop}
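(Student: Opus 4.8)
The plan is to lean on the uniqueness of the Jones--Wenzl idempotent proved above. I set
\[ P = \JW{n} + \frac{[n]}{[n+1]}\,\JW{n} e_n \JW{n} \in TL_{n+1}, \]
and aim to show that $P$ satisfies the three characterising properties~\eqref{eq:JW-characterisation} of $\JW{n+1}$; uniqueness then forces $P = \JW{n+1}$. Here $\JW{n}$ is read inside $TL_{n+1}$ with a vertical strand adjoined on the right, so the relations $e_i\JW{n} = \JW{n} e_i = 0$ persist for $i \in \set{1,\dots,n-1}$, and only $e_n$ touches the new strand. I will also use the nesting identity $\JW{n}\JW{n-1} = \JW{n-1}\JW{n} = \JW{n}$: since every non-identity diagram exposes an innermost (hence adjacent) cup and so factors as $e_k D'$, the element $\JW{n}$ annihilates the ideal $\mathcal{I}_{n-1}$ on both sides, and therefore absorbs $\JW{n-1} = \Id + (\text{an element of } \mathcal{I}_{n-1})$.

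Two of the checks are immediate. The second summand of $P$ lies in $\mathcal{I}_{n+1}$ (it carries the factor $e_n$), so $P$ has identity-coefficient $1$ and in particular $P \neq 0$. For $i \in \set{1,\dots,n-1}$ the relations $e_i \JW{n} = \JW{n} e_i = 0$ give $e_i P = P e_i = 0$ directly. Expanding $P^2$ and using $\JW{n}^2 = \JW{n}$, and likewise expanding $e_n P$, both reduce to understanding the single triple product $\JW{n} e_n \JW{n} e_n \JW{n}$ (respectively $e_n \JW{n} e_n \JW{n}$).

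The crux is therefore the partial-trace identity
\[ e_n \JW{n} e_n = -\frac{[n+1]}{[n]}\, e_n \JW{n-1}, \]
with $\JW{n-1}$ placed on strands $1,\dots,n-1$. Diagrammatically, flanking $\JW{n}$ by two copies of $e_n$ turns back the $n$-th strand of $\JW{n}$ and routes it along the straight $(n+1)$-st strand, which is exactly the closure of the last strand of $\JW{n}$ while the outer $e_n$ is left intact. I would prove it in two steps: first, the closed-off element is killed by $e_1,\dots,e_{n-2}$ (these commute past the closure and already annihilate $\JW{n}$), so, since $\JW{n-1}TL_{n-1}\JW{n-1} = \Complex\JW{n-1}$, it is a scalar multiple of $\JW{n-1}$; second, I fix the scalar by taking full closures. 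Writing $\Delta_m$ for the closure (quantum dimension) of $\JW{m}$, the identity forces the constant to be $\Delta_n/\Delta_{n-1}$, and running the whole argument as an induction on $n$---in which $\Delta_m = (-1)^m[m+1]$ is carried as part of the hypothesis, with base case the single loop $\Delta_1 = -[2]$ and inductive step the three-term recurrence $[n-1]+[n+1] = [2][n]$ (the $a=1$ case of Lemma~\ref{lem:quantum-integer-lemma})---produces the value $-[n+1]/[n]$.

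Granting this, the two remaining checks close up formally. Since $e_n$ commutes with $\JW{n-1}$ and $\JW{n-1}\JW{n} = \JW{n}$, the identity gives $e_n \JW{n} e_n \JW{n} = -\tfrac{[n+1]}{[n]}\, e_n \JW{n}$, whence
\[ e_n P = e_n \JW{n} + \frac{[n]}{[n+1]}\Bigl(-\frac{[n+1]}{[n]}\Bigr) e_n \JW{n} = 0, \]
and $P e_n = 0$ follows by the top--bottom symmetry of the formula. The same substitution, applied to $\JW{n} e_n \JW{n} e_n \JW{n} = -\tfrac{[n+1]}{[n]}\,\JW{n} e_n \JW{n}$, collapses $P^2$ to $P$. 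Uniqueness then identifies $P$ with $\JW{n+1}$. The main obstacle is precisely the partial-trace identity: proving that the closed-off diagram is genuinely proportional to $\JW{n-1}$ and extracting the constant $-[n+1]/[n]$ is where the real content and all the quantum-integer arithmetic reside, while everything else is bookkeeping with the two annihilation relations and idempotency.
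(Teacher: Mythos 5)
Your proof is correct, but note that the paper itself gives no proof of this proposition: it is stated as a well-known result, with the reader referred to Wenzl's original article and the textbook accounts cited there. What you have reconstructed is essentially that classical argument --- form $P = \JW{n}+\frac{[n]}{[n+1]}\JW{n}e_n\JW{n}$, verify the three characterising properties of Equation \ref{eq:JW-characterisation}, and invoke the uniqueness lemma --- and your sign bookkeeping is consistent with this paper's convention that loops count $-[2]$: the quantum dimensions are then $\Delta_m=(-1)^m[m+1]$, so the partial-trace constant $\Delta_n/\Delta_{n-1}=-[n+1]/[n]$ comes out as you state, and the identity $e_n\JW{n}e_n=-\frac{[n+1]}{[n]}e_n\JW{n-1}$ together with $\JW{n-1}\JW{n}=\JW{n}$ does collapse the checks $e_nP=0$ and $P^2=P$ exactly as you say. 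Your two-step proof of the partial-trace identity (the closed-off element is annihilated by $e_1,\dots,e_{n-2}$ on both sides, hence absorbed by $\JW{n-1}$ and so a scalar multiple of it; the scalar is fixed by taking full closures) is the standard one and is sound. Two points to make explicit in a final write-up: first, ``$Pe_n=0$ by top--bottom symmetry'' needs the one-line remark that vertical reflection is an anti-automorphism fixing $e_n$ and fixing $\JW{n}$ (its reflection satisfies the same characterisation, so uniqueness applies), or else just run the mirror computation $\JW{n}e_n\JW{n}e_n = -\frac{[n+1]}{[n]}\JW{n}\JW{n-1}e_n = -\frac{[n+1]}{[n]}\JW{n}e_n$; second, the induction must carry existence and quantum dimension together --- at stage $n$ you have $\JW{n}$ and $\Delta_m=(-1)^m[m+1]$ for $m\le n$, you prove the partial-trace identity, conclude $P=\JW{n+1}$ exists, and only then close the newly established recurrence, using $[n]+[n+2]=[2][n+1]$ (Lemma \ref{lem:quantum-integer-lemma} with $a=1$) and the fact that closing the last strand of $e_n$ yields the identity, to get $\Delta_{n+1}=\Delta_n\left(\frac{[n]}{[n+1]}-[2]\right)=(-1)^{n+1}[n+2]$. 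A bonus of your route is that it also establishes the \emph{existence} of $\JW{n+1}$, which the paper's characterisation-plus-uniqueness discussion likewise leaves to the references.
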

This is a well known result. The original paper is
\cite{MR0873400}. Various proofs can be found in any of
\cite{MR1366832,MR1858113,MR1280463,MR1227009}.

%% some examples!

\section{Simplifications of the Wenzl recurrence formula}
We will now consider the last term, $\frac{[n]}{[n+1]}\JW{n} e_{n}
\JW{n}$, in the Wenzl recurrence formula. By expanding this
appropriately, we will see that many of the terms do not
contribute.

Let $P$ denote the leftmost $n-2$ points along the top edge of an
$n$ strand diagram. Define $\mathcal{J}_{n} \subset TL_{n}$ as the
linear span of those diagrams in which any two points of $P$ are
connected together by a strand. This is a left ideal; multiplying
by any diagram on the right does not change this condition.
Further we can write $TL_{n} = \mathcal{J}_{n} \bigoplus
\mathcal{K}_{n}$, where $\mathcal{K}_{n}$ is spanned by the
diagrams in which the points of $P$ are all connected to points on
the bottom edge of the diagram. This collection of diagrams
consists of those diagrams with a single cup at the top right, and
a single cap at some position along the bottom edge, along with
the identity diagram. We denote these diagrams by $g_{n,i}$, with
$i=1,\ldots,n-1$, with the subscript $i$ indicating the position
of the cap. Further, for convenience we write $g_{n,n} = \Id$.
This is illustrated for $n=6$ in Figure
\ref{fig:single-right-cups}. From this, we see $\mathcal{J}_{n}$
has codimension $n$.

\begin{figure}[!hbp]
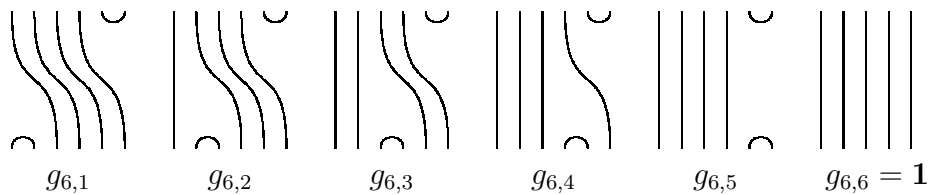

\label{fig:single-right-cups}
\begin{tabular}{cccccc}
\mbox{\xySingleRightCupSixOne} & \mbox{\xySingleRightCupSixTwo} &
\mbox{\xySingleRightCupSixThree} & \mbox{\xySingleRightCupSixFour}
&
\mbox{\xySingleRightCupSixFive} & \mbox{\xySingleRightCupSixSix} \\
$g_{6,1}$ & $g_{6,2}$ & $g_{6,3}$ & $g_{6,4}$ & $g_{6,5}$ & $g_{6,6} =
\Id$
\end{tabular}
\caption{The diagrams spanning $\mathcal{K}_6$.}
\end{figure}

\begin{lem}
The left ideal $\mathcal{J}_{n}$ is contained in the kernel of the
map $TL_{n} \subset TL_{n+1} \To TL_{n+1}$ given by $h \mapsto
\JW{n} e_{n} h$.
\end{lem}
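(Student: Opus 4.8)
The plan is to reduce to a single diagram and then invoke the defining relation $\JW{n} e_k = 0$. Since $\mathcal{J}_{n}$ is spanned by Temperley-Lieb diagrams and the map $h \mapsto \JW{n} e_{n} h$ is linear, it suffices to treat a single diagram $h \in \mathcal{J}_{n}$, i.e.\ a diagram whose top edge carries a cup joining two points $i < j$ of $P$, so that $i,j \le n-2$. First I would record the auxiliary fact that $\JW{n}$ annihilates any diagram with a cup on its top edge: an innermost such cup necessarily joins two \emph{adjacent} points $k,k+1$, so the diagram factors as $e_{k}$ times something, and $\JW{n} e_{k} = 0$ from Equation \ref{eq:JW-characterisation} finishes it.

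Next I would track the distinguished cup of $h$ through the product, viewing everything in $TL_{n+1}$ via the inclusion that adds a rightmost vertical strand. The generator $e_{n}$ acts as the identity on the strands in positions $1,\dots,n-1$ and only rearranges positions $n$ and $n+1$. Because both endpoints $i,j$ of the cup lie at positions $\le n-2$, they sit strictly to the left of the cap--cup pair of $e_{n}$; forming $e_{n} h$ therefore carries this cup up through the vertical strands of $e_{n}$ unchanged, so that $e_{n} h$ is a diagram whose top edge still contains a cup joining positions $i$ and $j$. This is exactly the step where the hypothesis $h \in \mathcal{J}_{n}$ (rather than an arbitrary diagram) is used: had an endpoint of the cup reached position $n-1$ or $n$, it could interact with $e_{n}$, producing a through-strand or a closed loop rather than a surviving cup.

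Finally I would apply the auxiliary fact to the left factor. Writing $\JW{n} = f \otimes \id$ for the $n$-strand idempotent tensored with the identity on the rightmost strand, the surviving cup at positions $i,j \le n-2$ lies entirely among the strands seen by $f$. The innermost cup nested inside it joins some adjacent pair $k,k+1$ with $k \le n-2$, so $e_{n} h = e_{k}\, Y$ for a suitable diagram $Y \in TL_{n+1}$; since $e_{k}$ does not touch the rightmost strand, $\JW{n} e_{k} = (f e_{k}) \otimes \id = 0$, and hence $\JW{n} e_{n} h = (\JW{n} e_{k})\, Y = 0$.

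I expect the only real obstacle to be the bookkeeping in the middle step: one must argue carefully that the cup of $h$ genuinely survives in $e_{n} h$ and is not merged, capped, or closed off by the cap--cup pair of $e_{n}$, and that the extra rightmost strand introduced by $TL_{n} \subset TL_{n+1}$ plays no role. Once the surviving top cup is identified, the reduction to the relation $\JW{n} e_{k} = 0$ is routine.
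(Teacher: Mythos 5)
Your proof is correct and takes essentially the same route as the paper: both arguments factor a generator $e_k$ with $k \leq n-2$ out of an innermost cup sitting among the leftmost strands and conclude from the defining relation $\JW{n} e_k = 0$. The only (cosmetic) difference is that the paper does this in one line by writing $h = e_i h'$ with $1 \leq i \leq n-2$ and commuting $e_i$ past $e_n$ via $e_n e_i = e_i e_n$ for $\abs{n-i} \geq 2$, whereas you first track the cup diagrammatically through the vertical strands of $e_n$ and factor $e_k$ out of the product $e_n h$ afterwards.
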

\begin{proof}
If $h$ is a diagram in $\mathcal{J}_{n}$, then we can write $h =
e_i h'$ for some $1 \leq i \leq n-2$, and $h' \in TL_{n}$. Then
$\JW{n} e_{n} e_i = \JW{n} e_i e_{n} = 0$.
\end{proof}

This immediately allows us to simplify the Wenzl recurrence
relation. Write $\JW{n} =
f_{\mathcal{J}}^{(n)}+f_{\mathcal{K}}^{(n)}$, with
$f_{\mathcal{J}}^{(n)} \in \mathcal{J}_n$ and
$f_{\mathcal{K}}^{(n)} \in \mathcal{K}_n$. Then we have
\begin{align*}
 \JW{n} e_n \JW{n} & = \JW{n} e_n (f_{\mathcal{J}}^{(n)}+f_{\mathcal{K}}^{(n)}) \\
    & = \JW{n} e_n f_{\mathcal{K}}^{(n)}.
\end{align*}
Now $\mathcal{K}_n$ is spanned by the diagrams $g_{n,i}$ for
$i=1,\dots,n$, so we can write
\[f_{\mathcal{K}}^{(n)} = \sum_{i=1}^{n} \coeff{n}{g_{n,i}} g_{n,i}.\]

From this we easily obtain
\begin{prop}[Simplified recurrence formula]
The Jones-Wenzl idempotents satisfy
\begin{equation}
 \label{eq:simplified-recurrence-formula}
 \JW{n+1} = \JW{n} \left(\sum_{i=1}^{n} \frac{[n]}{[n+1]} \coeff{n}{g_{n,i}} g_{n+1,i} + g_{n+1,n+1}\right).
\end{equation}

\end{prop}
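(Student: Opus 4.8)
The plan is to push the simplification established in the discussion preceding the statement back into the Wenzl recurrence formula \eqref{eq:wenzl-formula} and then read off the answer. That discussion already supplies
\[
\JW{n} e_n \JW{n} = \JW{n} e_n f_{\mathcal{K}}^{(n)} = \sum_{i=1}^{n} \coeff{n}{g_{n,i}} \, \JW{n} e_n g_{n,i},
\]
where the first equality uses the preceding Lemma (that $\mathcal{J}_n$ lies in the kernel of $h \mapsto \JW{n} e_n h$) and the second expands $f_{\mathcal{K}}^{(n)}$ over its spanning diagrams. Substituting into \eqref{eq:wenzl-formula} therefore reduces the whole proposition to understanding the single products $\JW{n} e_n g_{n,i}$, and in fact to understanding $e_n g_{n,i}$ as a diagram in $TL_{n+1}$.

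The heart of the argument is the purely diagrammatic identity
\[
e_n g_{n,i} = g_{n+1,i}, \qquad i = 1, \ldots, n,
\]
taken in $TL_{n+1}$, where $g_{n,i}$ is read via the inclusion $TL_n \subset TL_{n+1}$. I would prove it by tracing strands through the composite: the cup of $e_n$ at the top right provides precisely the top-right cup of $g_{n+1,i}$; the cap of $e_n$ splices the top-right cup of $g_{n,i}$ together with the added vertical strand into a single through strand; and the cap of $g_{n,i}$ at position $i$ passes untouched to the bottom of the product. Since no closed loop is ever created, the product is exactly the single-cup, single-cap diagram $g_{n+1,i}$, with no factor of $-[2]$ and no correction term. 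The cases $1 \leq i \leq n-1$ run uniformly, and the case $i=n$ uses $g_{n,n} = \Id$, where the identity specializes to $e_n = g_{n+1,n}$. I expect this bookkeeping — checking that the through strands of the composite match those of $g_{n+1,i}$ for every position of the cap, and confirming that no spurious loop appears — to be the main obstacle, though it is a finite and elementary verification rather than a conceptual one.

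With the identity in hand, the assembly is routine. Substituting gives
\[
\JW{n+1} = \JW{n} + \frac{[n]}{[n+1]} \sum_{i=1}^{n} \coeff{n}{g_{n,i}} \, \JW{n} g_{n+1,i},
\]
and factoring $\JW{n}$ out on the left, together with $g_{n+1,n+1} = \Id$ so that the leading term $\JW{n}$ becomes $\JW{n} g_{n+1,n+1}$, yields exactly \eqref{eq:simplified-recurrence-formula}. As a consistency check, the coefficient $\coeff{n}{g_{n,n}}$ in the $i=n$ summand is the coefficient of the identity diagram in $\JW{n}$, which equals $1$ by the earlier Lemma; this matches the expectation that the $i=n$ contribution is $\tfrac{[n]}{[n+1]} g_{n+1,n} = \tfrac{[n]}{[n+1]} e_n$.
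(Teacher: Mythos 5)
Your proposal is correct and matches the paper's own proof: both substitute the decomposition $\JW{n} e_n \JW{n} = \sum_i \coeff{n}{g_{n,i}} \JW{n} e_n g_{n,i}$ into the Wenzl recurrence and rest on the same key diagrammatic identity $e_n g_{n,i} = g_{n+1,i}$ (which the paper states as its Equation \ref{eq:e-g-calculation} and illustrates by a figure, where you verify it by strand-tracing), before factoring out $\JW{n}$ using $g_{n+1,n+1} = \Id$. Your consistency check of the $i=n$ term via $\coeff{n}{g_{n,n}} = 1$ is a nice addition but not a departure from the paper's argument.
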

\begin{proof}
We use the fact that
\begin{equation}
\label{eq:e-g-calculation} e_n g_{n,i} = g_{n+1,i},
\end{equation}
as illustrated in Figure \ref{fig:e-g-calculation}, and
calculate as follows:
\begin{align*}
 \JW{n+1} & = \JW{n} + \frac{[n]}{[n+1]} \JW{n} e_{n} \sum_{i=1}^{n} \coeff{n}{g_{n,i}} g_{n,i} \\
          & = \JW{n} + \frac{[n]}{[n+1]} \JW{n} \sum_{i=1}^{n} \coeff{n}{g_{n,i}} e_n g_{n,i} \\
          & = \JW{n} g_{n+1,n+1} + \frac{[n]}{[n+1]} \JW{n} \sum_{i=1}^{n} \coeff{n}{g_{n,i}} g_{n+1,i} \\
          & = \JW{n} \left(\sum_{i=1}^{n} \frac{[n]}{[n+1]} \coeff{n}{g_{n,i}} g_{n+1,i} + g_{n+1,n+1}\right).
\end{align*}
\end{proof}

\begin{figure}[!hbp]
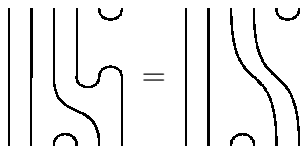

\label{fig:e-g-calculation}
\begin{equation*}
\xyMultiplicativeGeneratorSixFiveTimesGFiveThree = \xyGSixThree
\end{equation*} \caption{A sample calculation, $e_5 g_{5,3} = g_{6,3}$,
illustrating Equation \ref{eq:e-g-calculation}.}
\end{figure}

This simplification of the Wenzl recurrence relation is not in
itself particularly useful. It is still `quadratic' in the sense
that when expanded, each term contains two unknown coefficients.
However, we can now use it to make a direct calculation of the
quantities $\coefficient(g_{n,i})$, which will enable us to
further simplify the recurrence relation to a `linear' form.

\begin{prop}[Further simplified recurrence
formula]\label{prop:further-simplified-recurrence-formula} The
coefficients of the diagrams with `a single right cup' are given
by
\begin{equation}\label{eq:coefficients-single-right-cups}
 \coeff{n}{g_{n,i}} = \frac{[i]}{[n]},
\end{equation}
and the recurrence formula thus becomes
\begin{equation}\label{eq:further-simplified-recurrence-formula}
 \JW{n+1} = \frac{\JW{n}}{[n+1]} \left( \sum_{i=1}^{n+1} [i] g_{n+1,i} \right).
\end{equation}
\end{prop}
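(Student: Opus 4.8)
The plan is to prove the coefficient formula \eqref{eq:coefficients-single-right-cups} by induction on $n$, reading each coefficient directly off the simplified recurrence \eqref{eq:simplified-recurrence-formula}; the closed form \eqref{eq:further-simplified-recurrence-formula} will then follow by a one-line substitution. Writing $\JW{n+1} = \JW{n}\bigl(\sum_{j=1}^{n}\tfrac{[n]}{[n+1]}\coeff{n}{g_{n,j}}\, g_{n+1,j} + g_{n+1,n+1}\bigr)$, I would control the coefficient of the basis diagram $g_{n+1,i}$ in $\JW{n+1}$ through the numbers $M_{ij}$, defined as the coefficient of $g_{n+1,i}$ in the single product $\JW{n}\, g_{n+1,j}$. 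The whole argument rests on the claim that $M_{ij}=\delta_{ij}$, for then the coefficients merely rescale: $\coeff{n+1}{g_{n+1,i}} = \tfrac{[n]}{[n+1]}\coeff{n}{g_{n,i}}$ for $i\le n$, together with $\coeff{n+1}{g_{n+1,n+1}}=1$ from the identity-coefficient lemma.

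Establishing $M_{ij}=\delta_{ij}$ is where I expect the only genuine work, and I would argue diagrammatically in two steps. First, every basis diagram occurring in $\JW{n}\, g_{n+1,j}$ contains the cap joining the bottom points $j,j+1$: that arc lives entirely in the bottom factor $g_{n+1,j}$ and is untouched by the stacking, even after closed loops are removed. Since $g_{n+1,i}$ is the unique member of $\mathcal{K}_{n+1}$ whose sole bottom cap sits at position $i$, this forces $M_{ij}=0$ whenever $i\ne j$. Second, to see $M_{jj}=1$, split $\JW{n}=\Id+\sum_D c_D\,D$ with each $D\in\mathcal{I}_n$. The identity term gives $\Id\cdot g_{n+1,j}=g_{n+1,j}$, contributing $1$. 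Every other $D$, regarded in $TL_{n+1}$ with its rightmost strand vertical, has an innermost cup joining two adjacent top points $(p,p+1)$ with $p\le n-1$ (its rightmost strand cannot be cupped); because $\JW{n}$ is the top factor, this cup survives in $\JW{n}\, g_{n+1,j}$, so that product equals $e_p(\cdots)$ and lies in the left ideal $\mathcal{J}_{n+1}$. Hence the non-identity terms contribute nothing to the $\mathcal{K}_{n+1}$-component, giving $M_{jj}=1$ exactly, with no loop factors intervening.

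With $M_{ij}=\delta_{ij}$ in hand the recurrence $\coeff{n+1}{g_{n+1,i}}=\tfrac{[n]}{[n+1]}\coeff{n}{g_{n,i}}$ holds for $1\le i\le n$, and I would finish by telescoping: fixing $i$ and using the base value $\coeff{i}{g_{i,i}}=1$ (here $g_{i,i}=\Id$), the product $\coeff{n}{g_{n,i}}=\prod_{m=i+1}^{n}\tfrac{[m-1]}{[m]}=\tfrac{[i]}{[n]}$ collapses to \eqref{eq:coefficients-single-right-cups}. Finally, substituting $\coeff{n}{g_{n,i}}=\tfrac{[i]}{[n]}$ into \eqref{eq:simplified-recurrence-formula} turns the $i$-th coefficient into $\tfrac{[n]}{[n+1]}\cdot\tfrac{[i]}{[n]}=\tfrac{[i]}{[n+1]}$ and, since $g_{n+1,n+1}$ already carries the factor $\tfrac{[n+1]}{[n+1]}$, yields \eqref{eq:further-simplified-recurrence-formula} at once. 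I do not expect to need Lemma~\ref{lem:quantum-integer-lemma} for this proposition; the quantum-integer identity should enter only in the later, genuinely nonlinear coefficient computations.
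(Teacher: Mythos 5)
Correct, and essentially the paper's own argument: both proofs proceed by induction from the simplified recurrence, the crux in each being that in $\JW{n}\, g_{n+1,j}$ only the identity term of $\JW{n}$ can produce a diagram of $\mathcal{K}_{n+1}$ --- your claim $M_{ij}=\delta_{ij}$ is the paper's three-case analysis in different clothing (the paper tracks bottom caps of the diagrams $h$ appearing in $\JW{n}$ and counts through strands, where you track the surviving bottom cap of $g_{n+1,j}$ and a surviving top cup that places every non-identity product in $\mathcal{J}_{n+1}$). The only organizational difference is that you telescope the one-step ratio $\coeff{n+1}{g_{n+1,i}}=\frac{[n]}{[n+1]}\coeff{n}{g_{n,i}}$ to reach $\frac{[i]}{[n]}$, while the paper carries the closed form through the induction; you are also right that Lemma \ref{lem:quantum-integer-lemma} plays no role here, entering only in the later $k$-move argument.
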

\begin{proof}
At $n=1$, there is only one such diagram, $\Id = g_{1,1}$, with
coefficient $1$, as required. Now assume Equation
\ref{eq:coefficients-single-right-cups} holds for some value of
$n$. Equation \ref{eq:further-simplified-recurrence-formula}
follows immediately from Equation
\ref{eq:simplified-recurrence-formula}, by the following
calculation:
\begin{align*}
 \JW{n+1} & = \JW{n} \left(\sum_{i=1}^{n} \frac{[n]}{[n+1]} \frac{[i]}{[n]} g_{n+1,i} + g_{n+1,n+1}\right) \\
          & = \JW{n} \left(\sum_{i=1}^{n} \frac{[i]}{[n+1]} g_{n+1,i} + \frac{[n+1]}{[n+1]} g_{n+1,n+1}\right) \\
          & = \frac{\JW{n}}{[n+1]} \left(\sum_{i=1}^{n+1} [i] g_{n+1,i}\right).
\end{align*}
We will now use this to calculate the coefficient of $g_{n+1,i}$
in $\JW{n+1}$. Suppose $h$ is a diagram in $TL_{n}$, and consider
the term $\frac{[i]}{[n+1]} \coefficient(h) h g_{n+1,i}$ on the
right hand side of Equation
\ref{eq:further-simplified-recurrence-formula}. We will determine
the diagrams $h$ and values of $i$ for which this term contributes
to the $g_{n+1,j}$ term in $\JW{n+1}$. There are several cases to
consider.
\begin{enumerate}
\item The diagram $h$ contains a cap connecting two of the leftmost $n-1$ points at the bottom of the
diagram. In this case $h g_{n+1,i}$ has $n-4$ or fewer through
strands, and so can not contribute to the $g_{n+1,j}$ term in
$\JW{n+1}$. An example of this appears in Figure \ref{fig:cases}.
\item There is no such cap in $h$, but there is a cap connected the rightmost two points at the bottom of the
diagram. In this case the diagram $h g_{n+1,i}$ has a vertical
strand on the right hand side, and so again can not contribute. An
example appears in Figure \ref{fig:cases}.
\begin{figure}[!hbp]
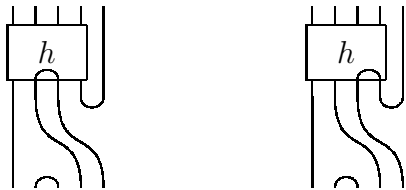
\label{fig:cases} %% perhaps don't let this float away?
\begin{equation*}
\xyCaseOne \qquad\qquad\qquad \xyCaseTwo
\end{equation*}
\caption{Examples illustrating the first two cases in Proposition
\ref{prop:further-simplified-recurrence-formula}. }
\end{figure}
\item There are no such caps, and $h$ is the identity diagram. In this case \[\frac{[i]}{[n+1]} \coeff{n}{h} h
g_{n+1,i} = \frac{[i]}{[n+1]} g_{n+1,i}.\]
\end{enumerate}
These cases are exhaustive, and so it is easily seen that there is
exactly one contribution to the $g_{n+1,j}$ term in $\JW{n+1}$,
coming from the identity term in $\JW{n}$ and the $g_{n+1,j}$ term
of the summation, and so the coefficient of $g_{n+1,j}$ in
$\JW{n+1}$ is exactly $\frac{[j]}{[n+1]}$. Thus by induction the
claimed result holds for all values of $n$.
\end{proof}

\begin{rem}
An analogue of this `linear' recurrence relation for idempotents in
the $\mathfrak{sl}_3$ spider (c.f. \cite{MR1403861}) appears in
Dongseok's work \cite{MR2360947,math.QA/0310143}, where it is
called a `single clasp expansion'.
\end{rem}

\section{Unfolding the recurrence formula}
Let's now think about the map $(\textrm{diagram}) \mapsto
(\textrm{diagram}) \sum_{i=1}^{n+1} \frac{[i]}{[n+1]} g_{n+1,i}$.
Multiplying an $n$ strand diagram by $g_{n+1,i}$ can be thought of
as `inserting a cap at the $i$-th position, and folding up the
right strand':

\begin{equation*}
\xymatrix@C=85pt@R=45pt@M+=10pt{
 \xyTLSixSeventyNine \ar[r]^{\txt{\scriptsize{multiply by $g_{7,3}$}}}  \ar[d]^{\txt{\scriptsize{insert}\\\scriptsize{a cap}}}&
                             \xyTLSixSeventyNineTimesGSevenThree \ar[d]^{\txt{\scriptsize{isotopy}}}\\
 \xyTLSixEightyOneAndCap \ar[r]^{\txt{\scriptsize{fold up the}\\\scriptsize{rightmost
 strand}}} & \xyTLSevenTwoHundredAndSixtyFive
}
\end{equation*}

Each diagrammatic term in $\JW{n+1}$ thus arises from a sum of
contributions generated in this way. Choose some diagram $D$ in
$TL_{n+1}$. To determine which terms in $\JW{n}$ contribute to the
coefficient of $D$ in $\JW{n+1}$, we should take $D$, and `fold
down the right strand, then select and remove an innermost cap'.
It is only the terms in $\JW{n}$ involving these diagrams which
matter in calculating the coefficient of $D$ in $\JW{n+1}$.
Suppose we chose to remove an innermost cap at position $i$. The
resulting diagram, when multiplied by the $g_{n+1,i}$, gives the
original diagram $D$.

\begin{prop}
Suppose $D$ is a diagram in $TL_{n+1}$. Let $\hat{D} \in
TL_{n,n+2}$ be the diagram obtained by folding down the top right
end point of $D$. Let $\{i\}$ be the set of positions of innermost
caps in $\hat{D}$, and $D_i \in TL_n$ be the diagram obtained by
removing that innermost cap. Then
\begin{equation}\label{eq:reduction-formula}
    \coeff{n+1}{D} = \sum_{\{i\}} \frac{[i]}{[n+1]}
    \coeff{n}{D_i}.
\end{equation}
\end{prop}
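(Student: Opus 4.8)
The plan is to invert the action of the simplified recurrence formula (Equation~\ref{eq:further-simplified-recurrence-formula}) term by term. The formula tells us that $\JW{n+1} = \frac{1}{[n+1]}\JW{n}\sum_{i=1}^{n+1}[i]\,g_{n+1,i}$, so to read off the coefficient of a fixed diagram $D \in TL_{n+1}$ I would expand the right-hand side over the basis of $\JW{n}$. Writing $\JW{n} = \sum_h \coefficient(h)\,h$ (sum over diagrams $h \in TL_n$), the coefficient of $D$ in $\JW{n+1}$ is
\[
 \coeff{n+1}{D} = \frac{1}{[n+1]}\sum_{i=1}^{n+1}\sum_{h}[i]\,\coeff{n}{h}\,\bigl(\text{coeff. of $D$ in } h\,g_{n+1,i}\bigr).
\]
The entire content of the proposition is then to identify exactly which pairs $(h,i)$ make $h\,g_{n+1,i} = D$ (as diagrams, with no closed loops produced), and to show these are precisely indexed by the innermost caps of $\hat D$.

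First I would make precise the geometric operation ``multiply by $g_{n+1,i}$,'' using the description already given in the unfolding discussion: right-multiplication by $g_{n+1,i}$ inserts a cap at position $i$ along the bottom and folds up the rightmost through strand, so that $h\,g_{n+1,i}$ is obtained from $h$ by a purely combinatorial move that never closes a loop. The key step is to run this move \emph{backwards}: given the target $D$, folding down its top-right endpoint produces $\hat D \in TL_{n,n+2}$, and I would argue that $h\,g_{n+1,i} = D$ holds for some $h\in TL_n$ if and only if $\hat D$ has an innermost cap at position $i$, in which case $h$ is forced to equal $D_i$, the diagram left after deleting that cap. The ``only if'' direction needs the observation that for $h\,g_{n+1,i}$ to equal $D$ rather than a diagram with fewer through strands, the cap inserted by $g_{n+1,i}$ must survive as an innermost cap of $\hat D$ — this is exactly the mechanism already isolated in the three-case analysis of Proposition~\ref{prop:further-simplified-recurrence-formula}, where caps that interfere (connecting leftmost points, or the rightmost pair) drop the through-strand count and contribute nothing.

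With this bijection in hand, the double sum collapses: each innermost cap position $i \in \{i\}$ contributes the single term $\frac{[i]}{[n+1]}\coeff{n}{D_i}$, and every other pair $(h,i)$ contributes zero, either because no $h$ maps to $D$ or because the product has too few through strands. Summing over the innermost cap positions gives exactly Equation~\ref{eq:reduction-formula}.

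\textbf{The main obstacle} I anticipate is the ``only if'' and uniqueness claim: showing rigorously that each way of producing $D$ corresponds to removing a genuinely \emph{innermost} cap of $\hat D$, and that distinct innermost caps give distinct, independent contributions (no double-counting, and no loop creation that would alter the coefficient). This is essentially a careful isotopy and planarity argument about Temperley--Lieb diagrams — verifying that folding down and then deleting an innermost cap is inverse to inserting a cap and folding up — and it is where the diagrammatic reasoning, rather than the algebra, carries the weight.
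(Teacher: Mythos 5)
Your proposal is correct and follows essentially the same route as the paper: the paper states this proposition without a formal proof, and the informal discussion preceding it (multiplication by $g_{n+1,i}$ as ``insert a cap at position $i$ and fold up the right strand,'' inverted by ``fold down the right strand and remove an innermost cap'') is exactly the bijection between innermost caps of $\hat{D}$ and contributing pairs $(h,i)$ that you make precise. Your additional observations --- that no closed loops can form in $h\,g_{n+1,i}$ for $h \in TL_n$, and that for fixed $i$ the diagram $h$ is forced to be $D_i$ --- are the right details to supply and are correct.
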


\begin{example}
Consider the diagram $\xyTLFiveEight \in TL_5$. Folding down the
rightmost strand gives $\xyFoldDownTLFiveEight$. There are now two
innermost caps we can remove, at positions $2$ and $5$. Thus
\begin{equation*}
\coeff{5}{\xyTLFiveEight} = \frac{[2]}{[5]}\coeff{4}{\xyTLFourTwo}
+ \frac{[5]}{[5]} \coeff{4}{\xyTLFourThree}.
\end{equation*}
We can continue in this way. The diagram $\xyTLFourTwo$ folds down
to give $\xyFoldDownTLFourTwo$, with only one cap to remove, and
similarly $\xyTLFourThree$ folds down to $\xyFoldDownTLFourThree$.
Thus

%% what's broken in this piece of code?
% \begin{align}
% \xybox{(0,-4.)*\xybox{(1.5,8);(1.5,0) **\crv{(2.5,2.4)&(0.5,5.6)},
% }}
% \end{align}

\begin{equation*}
 \coeff{5}{\xyTLFiveEight} =
 \frac{[2][3]}{[5][4]}
      \coeff{3}{\xyTLThreeOne} + \frac{[5][2]}{[5][4]}
      \coeff{3}{\xyTLThreeOne}
    = \frac{[2][3]+[5][2]}{[5][4]}.
\end{equation*}
\end{example}

%% do another example?

Thus the coefficient of a diagram is a certain sum over sequences
of choices of arcs to remove. Iterating the calculation in
Equation \ref{eq:reduction-formula} allows us to find the
coefficient of any diagram. Although this calculation is based on
a recursive step, it is very different from Wenzl's formula in
Equation \ref{eq:wenzl-formula}. In particular, we never need to
perform any multiplications in the Temperley-Lieb algebra, and we
can find the coefficient of a diagram without calculating the
entire projection, by performing simple combinatorial operations
on the diagrams.

\section{An explicit formula}
It is possible to write down an explicit formula giving the result
of this calculation, but it is made somewhat awkward by the fact
that the numbering of the strands changes as we successively
remove innermost caps.

A good way to think about the diagrams is as a `capform'
\cite{MR1858113}, produced by `folding the diagram down to the
right'.
\begin{equation*}
\xyTLFiveEightFullSize \quad \leftrightsquigarrow \quad
\xyCapFormTLFiveEight
\end{equation*}

Now, for a diagram with $n$ strands, let
\begin{equation*}
S = \setc{(s_1,\ldots,s_n)\in \Natural^n}{\parbox{200pt}{the $s_i$
are all distinct, $1 \leq s_i \leq n+i-1$, $s_i$ is the position
of the left end of a cap for each $i$, and if $\tilde{s_i}$
denotes the position of the corresponding right end, then if
$i<j$, and $s_i < s_j$, then $\tilde{s_i} < s_j$ also}}.
\end{equation*}
The sequences in $S$ specify choices of orders in which to remove
strands. The restriction $1 \leq s_i \leq n+i-1$ ensures that we
only remove a strand when its initial point is in the left half of
the capform, and the second restriction ensures that we remove
only innermost caps.

This set $S$ is not quite what is needed, because although it
describes the orders in which we can remove strands, the factors
appearing in Equation \ref{eq:reduction-formula} depend on the
position of the cap at the moment we remove it.

\noop{ This position is given by the map $\tau:(s_1,\ldots,s_n)
\mapsto (t_1,\ldots,t_n)$
\begin{align*}
t_1 & = s_1, \\
t_2 & = \begin{cases}s_2 & \text{if $s_1 > s_2$} \\ s_2 - 2 &
\text{if $s_1 < s_2$}\end{cases} \\
 t_i & = s_i - 2 \kappa_i, \quad \text{where $\kappa_i =
\#\setc{1 \leq j \leq i-1}{s_j < s_i}$}
\end{align*} }

This position is given by the map $\tau:s \mapsto s - \kappa(s)$,
where $$\kappa(s)_i = \#\setc{1 \leq j \leq i-1}{s_j < s_i}.$$
Thus for example $\tau(s)_2 = \begin{cases}s_2 & \text{if $s_1 >
s_2$} \\ s_2 - 2 & \text{if $s_1 < s_2$}\end{cases}$.

Then we have
\begin{prop}\label{prop:formula}
The coefficient in $\JW{n}$ of a diagram $D$ with index set $S$,
as given above, is
\begin{equation}
\coeff{n}{D} = \frac{1}{[n]!} \sum_{s \in S} [\tau(s)],
\end{equation}
using the convenient notations $[n]!=[n][n-1]\cdots[1]$ and
$[(t_1,\ldots,t_n)]=[t_1]\cdots[t_n]$.
\end{prop}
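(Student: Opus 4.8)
The plan is to prove Proposition~\ref{prop:formula} by induction on $n$, unfolding the reduction formula of Equation~\ref{eq:reduction-formula} repeatedly until the diagram is completely consumed. Each application of Equation~\ref{eq:reduction-formula}, used to reduce $\coeff{m}{\cdot}$, strips away one innermost cap and contributes a factor $[i]/[m]$, where $i$ is that cap's position. Following one complete chain of such choices from $\coeff{n}{\cdot}$ all the way down to the empty diagram in $TL_0$ (whose coefficient is $1$) therefore collects the denominators $[n][n-1]\cdots[1]=[n]!$ together with a numerator equal to the product of the removed-cap positions, one per step. The whole proposition is thus the combinatorial assertion that these chains are in bijection with $S$, and that the numerator collected along the chain indexed by $s=(s_1,\dots,s_n)$ is exactly $[\tau(s)]=[\tau(s)_1]\cdots[\tau(s)_n]$.

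For the base case $n=0$ the set $S$ consists of the single empty sequence, $[\tau(s)]$ is the empty product $1$, and $[0]!=1$, so the formula reduces to $\coeff{0}{\cdot}=1$, as it must. For the inductive step, fix a diagram $D$ in $TL_n$ and apply Equation~\ref{eq:reduction-formula}:
\[
\coeff{n}{D} = \sum_{\{i\}} \frac{[i]}{[n]}\,\coeff{n-1}{D_i},
\]
where $i$ runs over the positions of innermost caps in the fold-down $\hat D$ and $D_i\in TL_{n-1}$ is obtained by deleting that cap. Substituting the inductive hypothesis for each $\coeff{n-1}{D_i}$ rewrites the right-hand side as
\[
\frac{1}{[n]!}\sum_{\{i\}} [i] \sum_{s'\in S(D_i)} [\tau(s')],
\]
writing $S(D_i)$ for the index set of the smaller diagram $D_i$. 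It therefore suffices to exhibit a bijection between $S=S(D)$ and the disjoint union $\bigsqcup_i\,\{i\}\times S(D_i)$ under which $[\tau(s)]$ factors as $[i]\,[\tau(s')]$. Here $s_1$ is the position of the first cap removed, so $s_1=i$, and since $\kappa(s)_1=0$ we have $\tau(s)_1=s_1=i$, producing the leading factor $[i]$; the tail $(s_2,\dots,s_n)$, after the relabelling forced by the deletion, should be exactly a valid removal sequence $s'$ for $D_i$.

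The hard part, and the real content, is verifying that $\tau$ faithfully records the position of each cap \emph{at the instant it is removed}. Deleting a cap causes every point lying to its right to slide leftward, so after several deletions the position of a surviving cap has decreased by an amount governed by the number of already-removed caps sitting to its left; this count is precisely $\kappa(s)_i=\#\setc{1\le j\le i-1}{s_j<s_i}$, and the paper's own sample value $\tau(s)_2=s_2-2$ (when $s_1<s_2$) is the simplest instance of the resulting running position. Interlocking with this is the matching of the two \emph{innermost} conditions: the requirement $\tilde{s_i}<s_j$ whenever $i<j$ and $s_i<s_j$ in the definition of $S$ is exactly what keeps the cap at $s_j$ innermost once the earlier caps have been removed, while the bound $1\le s_i\le n+i-1$ records that at step $i$ only the strands folded down so far are eligible for removal. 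The one genuinely fiddly point is to check that, upon passing from $D$ to $D_i$, these constraints together with $\tau$ transform correctly --- i.e.\ that deleting the cap at $s_1$ and relabelling carries $S(D)$ onto $\bigsqcup_i\{i\}\times S(D_i)$ with $\tau$ intertwined; once that is confirmed, the factorisation $[\tau(s)]=[i]\,[\tau(s')]$ holds termwise and the induction closes.
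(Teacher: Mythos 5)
Your induction is exactly the argument the paper intends: the paper states Proposition \ref{prop:formula} without any formal proof, presenting it as the direct unfolding of Equation \ref{eq:reduction-formula} with $S$ encoding the valid removal orders and $\tau$ recording positions at the moment of removal, which is precisely your decomposition $S(D)\cong\bigsqcup_i \{i\}\times S(D_i)$ together with the factorisation $[\tau(s)]=[i]\,[\tau(s')]$. The ``fiddly point'' you flag but defer does go through --- the pairwise condition forces the first-removed cap to be innermost in the capform, so deleting it shifts each later entry $s_{j+1}$ down by $2$ exactly when $s_1<s_{j+1}$, which cancels the corresponding unit drop in $\kappa$, giving $\tau(s')_j=\tau(s)_{j+1}$ while the bounds $1\le s'_j\le (n-1)+j-1$ and the innermost condition transform correctly in both directions --- so your write-up is, if anything, more complete than the paper's own treatment.
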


\begin{example}
We redo the calculation of $\coeff{5}{\xyTLFiveEight}$. The index
set has two elements, $S=\{(2,5,7,4,1),(5,2,7,4,1\}$. Then
$\tau(S) = \{(2,3,3,2,1),(5,2,3,2,1)\}$, and so
\begin{equation*}
\coeff{5}{\xyTLFiveEight} =
\frac{[2][3][3][2][1]+[5][2][3][2][1]}{[5]!} =
\frac{[2][3]+[5][2]}{[5][4]},
\end{equation*}
as we calculated before.
\end{example}

\section{\texorpdfstring{$k$}{k}-moves}\label{sec:kmoves}
We'll next apply this algorithm for computing coefficients to
prove `$k$-move invariance'. A $k$-move acts on the capform of a
diagram transforming a collection of $k$ nested caps with centre
strictly in the left half of the capform into $k-1$ nested caps to
the right of a single cap, while leaving the rest of the diagram
unchanged. We apply $k$-moves to rectangular Temperley-Lieb
diagrams by converting to a capform, applying the move as
described, and converting back.

Thus, a valid $4$-move is illustrated below.
\begin{equation*}
\xyTLMoveInitial \quad \mapsto \quad \xyTLMoveFinalOne
\end{equation*}

The condition that the centre of the capform must lie in the left
half of the diagram requires that the move does not decrease the
number of through strands in the original diagram.

The following theorem relating the coefficients of diagrams
obtained by $k$-moves allows very efficient calculations in many
situations.

\begin{prop}\label{prop:k-move}
If $D'$ is obtained from a diagram $D \in TL_n $ by a $k$-move
then
\begin{equation*}
[k] \coeff{n}{D} = \coeff{n}{D'}.
\end{equation*}
\end{prop}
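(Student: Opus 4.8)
The plan is to use the explicit formula from Proposition~\ref{prop:formula}, namely $\coeff{n}{D} = \frac{1}{[n]!}\sum_{s\in S}[\tau(s)]$, and to set up a bijection between the index set $S$ of $D$ and the index set $S'$ of $D'$ that multiplies each summand's weight by exactly $[k]$. The key observation is that a $k$-move is a very local operation on the capform: it affects only the $k$ nested caps being rearranged, leaving all other caps (and hence all other strand positions) untouched. So I expect the bijection to act as the identity on the ``removal order'' of the unaffected caps and to reshuffle only the order in which the $k$ caps in the affected cluster are removed.

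First I would fix the local picture. Label the $k$ nested caps in $D$ by their left endpoints, and likewise the reconfigured caps in $D'$ (one outer cap plus $k-1$ nested caps to its right). For a sequence $s\in S$, I would record the relative order in which the members of the affected cluster appear, together with the positions at which each is removed --- recalling that the relevant weight factor $[\tau(s)_i]$ is the \emph{position of the cap at the moment of removal}, after the map $\tau$ shifts by $\kappa(s)$ to account for caps already removed to the left. The heart of the argument is a careful bookkeeping lemma: when we remove the innermost caps of a nested cluster one at a time from the inside out, the position-at-removal of the outermost cap in the cluster, as recorded by $\tau$, differs between the $D$ and $D'$ configurations in a way that contributes precisely one extra factor of $[k]$ (or, dually, the cluster in $D'$ contributes $[k]$ times what the cluster in $D$ contributes).

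More concretely, I would argue that the sum $\sum_{s\in S}[\tau(s)]$ can be factored (or at least reorganised) according to the choices made inside the affected cluster versus outside it. Because the $k$-move does not change the number of through strands (the centre of the cluster stays in the left half) and leaves the rest of the capform intact, the ``outside'' choices and their $\tau$-weights are in canonical bijection between $S$ and $S'$. What remains is a purely local identity comparing the total weight coming from the $k$ nested caps against that of the one-plus-$(k-1)$ configuration. This local identity is where Lemma~\ref{lem:quantum-integer-lemma}, the quantum-integer relation $[m-a]+[m+1][a]=[m][a+1]$, should enter: summing the position-weights over the allowed removal orders of $k$ nested caps telescopes, via that identity, into $[k]$ times the corresponding sum for $D$.

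The main obstacle I anticipate is controlling the $\kappa$-shifts cleanly. Since $\tau(s)_i = s_i - \kappa(s)_i$ depends on how many earlier-removed caps lie to the left of $s_i$, removing caps from the affected cluster changes the recorded positions of caps removed later, and vice versa; so the ``inside'' and ``outside'' weights are not literally independent, and the bijection cannot be a naive product decomposition. I would handle this by choosing the bijection so that it preserves, for every cap outside the cluster, both its $s$-value and the number of cluster-caps removed before it --- which is possible precisely because the $k$-move preserves the left-half/right-half split of the cluster's centre. Establishing that this choice is consistent with the ordering constraints defining $S$ (the condition $\tilde{s_i}<s_j$ when $i<j$ and $s_i<s_j$), and that it induces a genuine bijection $S\to S'$ rather than merely a weight-preserving-up-to-$[k]$ map on a subset, is the delicate step that will require the most care.
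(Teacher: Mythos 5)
Your starting point (the explicit formula of Proposition~\ref{prop:formula}) and your identification of Lemma~\ref{lem:quantum-integer-lemma} as the engine of the argument both match the paper, but the central mechanism you propose --- a \emph{bijection} $S \to S'$ that multiplies each summand's weight by $[k]$ --- cannot work, because $S$ and $S'$ do not have the same cardinality. In $D$ the removal order of the $k$ nested caps is completely forced (inside out), whereas in $D'$ the detached single cap is innermost from the start and may be removed at any of $k$ moments relative to the $k-1$ remaining nested caps. So each $s \in S$ corresponds to $k$ distinct sequences $s^{(1)},\dots,s^{(k)} \in S'$, agreeing with $s$ at all unmarked positions, and these exhaust $S'$; hence $\abs{S'} = k\,\abs{S}$ whenever $k \geq 2$. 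Moreover no single term can absorb the factor $[k]$: the weights $[\tau(s')]$ are products of quantum integers, while $[k][b]$ is not a quantum integer but the sum $[b+k-1]+[b+k-3]+\cdots+[b-k+1]$ of $k$ of them (for $b \geq k$) --- which is exactly why $k$ distinct elements of $S'$ must be merged to produce one term of $[k]\coeff{n}{D}$.

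The paper's proof runs along this one-to-$k$ correspondence: writing $b_i = a+k-i-2\kappa_i$ for the $\tau$-values at the marked positions of $s$, it computes $\db{\tau(s^{(i)})} = (b_1+1,\dots,b_{i-1}+1,\ b_i-k+i,\ b_{i+1},\dots,b_k)$ --- and note that your anticipated difficulty with the $\kappa$-shifts evaporates here, since the removal times outside the cluster are literally unchanged, so $\kappa(s^{(i)})$ agrees with $\kappa(s)$ at all unmarked positions automatically. It then establishes $\sum_{i=1}^k [\db{\tau(s^{(i)})}] = [k]\,[(b_1,\dots,b_k)]$ by a descending induction on partial sums $T_l$, where Lemma~\ref{lem:quantum-integer-lemma} supplies the telescoping step $[b_l-k+l] + [b_l+1][k-l] = [b_l][k-l+1]$. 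Your closing sentence flags proving ``a genuine bijection $S \to S'$'' as the delicate step; that is precisely the step that fails, and your own (correct) intuition that the local weights ``telescope via that identity into $[k]$ times the sum'' is the multi-term replacement for it. To repair the proposal, replace the bijection by the map $(s,i) \mapsto s^{(i)}$, prove it is a bijection $S \times \set{1,\dots,k} \to S'$, and carry out the telescoping identity; the rest of your outline then goes through essentially as in the paper.
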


The proof is a somewhat complicated combinatorial argument, based
on the algorithm above, and manipulation of relations amongst the
quantum integers.

We use the notation of Proposition \ref{prop:formula}. First we
describe the structure of the index set $S'$ for the diagram $D'$,
in terms of the index set $S$ for $D$.

Each $s \in S$ describes an order in which to successively remove
strands. In particular, it tells us the (increasing) times at
which we remove each of the $k$ nested caps. Associated to this
ordering we have several possible orderings for the diagram $D'$.
Instead of removing the $k$ caps in order, we can now remove the
additional single cap at any point instead. Thus we obtain $k$
different elements of $S'$, which remove strands in the rest of
the diagram at exactly the same times as $s$. At some point
(different for each of the $k$ elements) instead of removing the
current innermost cap of the $k$ nested caps, we remove the new
single cap. It is not too hard to see that we obtain all valid
sequences in $S'$ this way, and each exactly once. This is
formalised in the next paragraph.

Suppose the leftmost arc of the $k$ nested caps in $D$ is the
$a$-th strand. For each $s \in S$, define $s^{(1)},\ldots,s^{(k)}
\in S'$ as follows. Let $j_1 < \cdots < j_k$ be the positions in
$s$ of the numbers $a+k-1, \ldots, a$, and call these positions
`marked'. Because of the nested structure, we have $s_{j_i} =
a+k-i$. In the following we'll often need to describe the elements
of a sequence of the marked positions, so we'll introduce the
following notation: $$\db{s} = \left(s_{j_i}\right)_{i=1}^k =
(a+k-1,a+k-2 ,\ldots,a).$$ Now let $s^{(i)}$ be the same as $s$ in
the unmarked positions, and $$\db{s^{(i)}} =
(a+k,a+k-1,\ldots,a+k+2-i,\underbrace{a}_{\text{$i$-th position}}
,a+k-i+1,\ldots,a+3,a+2).$$ That is, $\db{s^{(i)}} = \db{s} +
(1,1,1,\ldots,1,i-k,2,\ldots,2,2)$.

\begin{lem}
$$S' = \setc{s^{(i)}}{s \in S, i \in 1,\ldots,k}$$ and so
$$\coeff{n}{D'} = \frac{1}{[n]!} \sum_{s \in S} \sum_{i=1}^{k}
[\tau(s^{(i)})].$$
\end{lem}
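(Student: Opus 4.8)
The plan is to prove the set equality $S' = \setc{s^{(i)}}{s \in S,\ 1 \le i \le k}$; the displayed coefficient formula then follows at once by applying Proposition \ref{prop:formula} to the diagram $D'$, since that proposition gives $\coeff{n}{D'} = \frac{1}{[n]!}\sum_{s' \in S'}[\tau(s')]$, and the claimed description of $S'$ lets us reindex this single sum as the double sum over $s \in S$ and $i \in \{1,\ldots,k\}$. Thus the entire content is to show that $(s,i)\mapsto s^{(i)}$ is a bijection from $S \times \{1,\ldots,k\}$ onto $S'$. I would organize this as three steps: the map is well-defined (each $s^{(i)}$ genuinely lies in $S'$), it is injective, and it is surjective.

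For well-definedness, the crucial simplification is that a $k$-move alters only the $k$ nested caps, leaving the rest of the capform of $D$ unchanged; hence $s^{(i)}$ agrees with $s$ in all unmarked positions, and every defining condition of $S'$ involving only unmarked positions holds automatically because it held for $s \in S$ and the corresponding caps have identical endpoints in $D$ and $D'$. It then remains to check the marked positions and their interaction with the unmarked ones. From the explicit formula for $\db{s^{(i)}}$ one reads off that its entries are exactly $a$ together with $a+2, a+3, \ldots, a+k$, which are precisely the left endpoints of the $k$ caps of $D'$ --- the single cap at $a$ and the $k-1$ nested caps at $a+2,\ldots,a+k$ --- so the distinctness and cap-position conditions hold. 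Moreover the nested caps are scheduled innermost first (the values $a+k, a+k-1, \ldots, a+2$ occur in decreasing order as the time index increases), while the single cap, being innermost for all time, may be removed in the $i$-th slot without violating the innermost/nesting condition $\tilde{s_i} < s_j$. Finally the bound $1 \le s^{(i)}_\ell \le n+\ell-1$ at the marked times follows from the hypothesis defining a valid $k$-move, namely that the centre of the configuration lies strictly in the left half of the capform (equivalently, the move does not decrease the number of through strands), which supplies the room needed to absorb the values raised by one or two.

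Injectivity is immediate: $s^{(i)}$ determines $s$ by restricting to the unmarked positions and restoring the forced block $\db{s} = (a+k-1,\ldots,a)$ in the marked slots, while $i$ is recovered as the unique marked slot carrying the value $a$. For surjectivity, given an arbitrary $s' \in S'$ I would locate the $k$ times at which the $k$ caps of $D'$ are removed, designate these as the marked positions, read off $i$ as the slot at which the single cap (value $a$) is removed, and reconstruct a candidate $s$ by deleting the interleaved single cap and relabelling the remaining nested values back to $\db{s} = (a+k-1,\ldots,a)$. One then verifies that this $s$ lies in $S$ and that $s^{(i)} = s'$.

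The main obstacle is the nesting condition, which must be transported across the $k$-move in both directions. Concretely, I expect the delicate point to be confirming in the surjectivity step that every valid removal order for $D'$ \emph{forces} the $k-1$ nested caps to be taken innermost first with the single cap interleaved in exactly one slot, so that undoing the move yields a \emph{valid} innermost-first order for the $k$ nested caps of $D$; this requires translating the constraint $\tilde{s_i} < s_j$ between the two capforms and checking that the single cap neither obstructs, nor can be obstructed by, the removal of any nested cap. The bound condition $s_\ell \le n+\ell-1$ is the other bookkeeping point, and it is precisely the centre-in-left-half hypothesis that makes it go through.
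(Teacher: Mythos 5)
Your proposal is correct and takes essentially the same route as the paper: the paper itself offers no formal proof of this lemma, asserting only that each $s \in S$ yields the $k$ sequences $s^{(1)},\ldots,s^{(k)}$ and that ``it is not too hard to see'' every element of $S'$ arises exactly once, with the coefficient formula then following from Proposition \ref{prop:formula} applied to $D'$. Your well-definedness/injectivity/surjectivity decomposition of the map $(s,i) \mapsto s^{(i)}$, including the observations that the single cap is permanently innermost, that the $k-1$ nested caps of $D'$ are forced to be removed innermost first, and that the centre-in-left-half hypothesis supplies the bound $s_\ell \le n+\ell-1$, is precisely a careful formalization of the bijection the paper leaves implicit.
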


\begin{proof}[Proof of Proposition \ref{prop:k-move}]
We calculate $\tau(s^{(i)})$, then prove that $\sum_{i=1}^k
[\tau(s^{(i)})] = [k] [\tau(s)]$.

Firstly, suppose $\db{\kappa(s)} = (\kappa_1, \ldots, \kappa_k)$,
so $\tau(s)_{j_i} = a+k-i-2\kappa_i$. For brevity we'll define
$b_i = a+k-i-2\kappa_i$. Outside the marked positions,
$\kappa(s^{(i)})$ agrees with $\kappa(s)$, and
$\db{\kappa(s^{(i)})} = (\kappa_1, \kappa_2, \ldots, \kappa_{i-1},
\kappa_i, \kappa_{i+1}+1, \ldots, \kappa_k + 1)$. Thus
\begin{align*}
\db{\tau(s^{(i)})} & = (a+k-2\kappa_1, a+k-1-2\kappa_2, \ldots,
    a+k-(i-2)-2\kappa_{i-1}, a- 2 \kappa_i,
    a+k-(i+1)-2\kappa_{i+1},\ldots,a-2\kappa_k)\\
        & = (b_1 + 1, b_2 +1, \ldots, b_{i-1}+1, b_i-k+i,
        b_{i+1},\ldots,b_k).
\end{align*}
%% learn how to use split!!

We want to prove that $\sum_{i=1}^k [\db{\tau(s^{(i)})}] =
[k][(b_1,\ldots,b_k)]$. To this end, define the partial sum $T_l =
\sum_{i=1}^l [\db{\tau(s^{(i)})}]$. We will show that
\begin{equation}\label{eq:T-formula}
 T_k = T_l + \prod_{j=1}^l [b_j + 1] \cdot [k-l] \cdot \prod_{j=l+1}^{k} [b_j]
\end{equation}
for each $l$, and so, evaluating at $l=0$, $T_k = [k]
\prod_{j=1}^{k} [b_j]$, as required.

Certainly Equation \ref{eq:T-formula} holds for $l=k$, since $[0]
= 0$. Suppose it holds for some value $l$. We can pull out the
final term of the summation, and obtain
\begin{align*}
  T_k & = T_l + \prod_{j=1}^l [b_j + 1] \cdot [k-l] \cdot \prod_{j=l+1}^{k} [b_j]\\
      & = T_{l-1} + \prod_{j=1}^{l-1} [b_j+1] \cdot ([b_l - k + l]
            + [b_l + 1] [k-l]) \prod_{j=l+1}^{k} [b_j]\\
\intertext{and by Lemma \ref{lem:quantum-integer-lemma}, this is}
    & = T_{l-1} + \prod_{j=1}^{l-1} [b_j+1] \cdot [k-l+1][b_l]
        \cdot \prod_{j=l+1}^{k} [b_j].
\end{align*}
Thus Equation \ref{eq:T-formula} also holds for $l-1$,
establishing the result.
\end{proof}

%\section{Results of Khovanov and Frenkel}
%% write this!

\section{Results of Ocneanu and of Reznikoff}
A similar formula has previously been published for these
coefficients, by Ocneanu \cite{MR1907188}, although a proof of
that formula was not given. His formula uses the alternative
convention that closed loops have value $[2]$.

%% comparison of the complexity of the formulas

Subsequently, a proof of special cases of this formula was been
provided by Reznikoff \cite{rez:rptla,MR2375712}. The proof
confirms Ocneanu's formula for diagrams in $TL_n$ with $n-2$ or
$n-4$ through strings, and uses very different methods (via the
Brauer representation of the Temperley-Lieb algebra) from those
employed here.

The method presented here readily reproduces Reznikoff's results. Some
examples of this are given below. In doing so, this proves that Ocneanu's
formula and the formula here are equivalent for diagrams with $n-2$ or
$n-4$ through strings. However, I have been unable to obtain a direct proof
that the formulas agree for all diagrams.

It is reasonably easy to prove that in limited cases the $k$-move
invariance described in \S \ref{sec:kmoves} holds for Ocneanu's
formula as well. In particular, for two diagrams related by a
$k$-move that involves no through strings at all, the coefficients
given by Ocneanu's formula agree with Proposition \ref{prop:k-move}.

This suggests a way to prove the equivalence of the formula here and
Ocneanu's directly. If we knew the two formulas agreed on some class
of simple diagrams, they would also agree on all diagrams obtained
from these by a sequence of $k$-moves and inverse $k$-moves.
However, the equivalence classes of diagrams under these moves are
not particularly large; they each contain a single diagram with no
nested caps or cups. %% what's really going on?

\section{An application to diagrams with \texorpdfstring{$n-4$}{n-4} through strings}
In this section, we give an explicit calculation of the
coefficient of certain diagrams with exactly $n-4$ through strings.
Although we only do one case here, all the other cases are no more
difficult.

We use a combination of the summation formula of Equation
\ref{eq:reduction-formula} and the $k$-moves of the \S \ref{sec:kmoves}.
Hopefully this will illustrate the computational power of these techniques!

A diagram with $n-4$ through strings has exactly $2$ caps and $2$
cups. We restrict our attention to those diagrams with no nested
caps or cups. Consider such a diagram $D$. Thus we can
unambiguously refer to these as the `left cap', `right cap', `left
cup', and `right cup'. Suppose the leftmost points of these arcs
occur at positions $b_1$, $b_2$, $t_1$ and $t_2$. (And of course,
$b_2 \geq b_1 + 2$, $t_2 \geq t_1 + 2$.)

Because the coefficients of diagrams are preserved when the diagram is
reflected in a horizontal line, we may assume that the right cap is no
further to the right than the right cup, that is, that $t_2 \geq b_2$.

In this configuration, we can apply an inverse $(n - t_2)$-move, moving to
right cup as far to the right as possible, obtaining the diagram $D'$, with
$t_2 = n-1$. The coefficients are related by $\coeff{n}{D'} =
\frac{1}{[n-t_2]} \coeff{n}{D}$, by Proposition \ref{prop:k-move}.

We now apply the reduction formula. Folding down the top right
point of the diagram turns the right cup into a through strand.
Next, we have to choose one of the caps, at positions $b_1$ and
$b-2$, to remove. The resulting diagrams are $D_{b_1}'$, with a
cap at position $b_2 - 2$ and a cup at position $t_1$, and
$D_{b_2}'$ with a cap at position $b_1$ and a cup at position
$t_1$. Then Equation \ref{eq:reduction-formula} then tells us
\begin{equation}
\coeff{n}{D'} = \frac{[b_1]}{[n]} \coeff{n-1}{D_{b_1}'}
          + \frac{[b_2]}{[n]} \coeff{n-1}{D_{b-2}'}.
\end{equation}
The coefficients appearing here depend on the relative ordering of
$b_2 - 2$ and $t_1$ (for the first term), and of $b_1$ and $t_1$
(for the second term). We'll assume now that $t_1 \geq b_2 - 2$.
(The other two cases, $b_1 \leq t_1 \leq b_2 - 2$ and $t_1 \leq
b_1$, are exactly analogous.) In this case, we can apply an
inverse move to each diagram, as above, to move the cup to the far
right, and then use Equation
\ref{eq:coefficients-single-right-cups}. Thus
\begin{align*}
\coeff{n-1}{D_{b-1}'} & = \frac{[b_2 - 2][n-1-t_1]}{[n-1]} \\
\coeff{n-1}{D_{b-2}'} & = \frac{[b_1][n-1-t_1]}{[n-1]}.
\end{align*}
Putting this all together, we obtain
\begin{align*}
 \coeff{n}{D} & = \frac{[b_1]([b_2]+[b_2-2])[n-1-t_1][n-t_2]}{[n][n-1]} \\
              & = \frac{[2][b_1][b_2-1][n-1-t_1][n-t_2]}{[n][n-1]}
\end{align*}
This agrees with the formula given as Equation 3 in
\cite{MR2375712}, for `Style 3' diagrams. (The other two styles
of diagrams there correspond exactly to the other two cases
described previously.)

% ----------------------------------------------------------------
%\newpage
\bibliographystyle{alpha}
\bibliography{../../../bibliography/bibliography}
\end{document}